\newcommand{\relint}{\text{relint}}
\newcommand{\Oh}{{\mathcal O}}
\newcommand{\dmin}{\displaystyle\min}
\newcommand{\dmax}{\displaystyle\max}
\newcommand{\ip}[2]{\left\langle #1 , #2 \right\rangle}    
\newcommand{\R}{{\mathbb R}}
\newcommand{\E}{{\mathbb E}}
\newcommand{\F}{{\mathbb F}}
\newcommand{\dom}{{\mathrm{dom}}}
\newcommand{\D}{{\mathcal{D}}}
\DeclareMathOperator*{\argmin}{arg\,min}
\newtheorem{lemma}{Lemma}
\newtheorem{theorem}{Theorem}
\newtheorem{proposition}{Proposition}
\newtheorem{corollary}{Corollary}
\newtheorem{definition}{Definition}
\newtheorem{assumption}{Assumption}
\newcommand{\gcggap}{{\mathsf{CGgap}}}
\title{Perturbed Fenchel duality and first-order  methods}
\author{
David H. Gutman\thanks{Department of Industrial, Manufacturing and Systems Engineering,
Texas Tech University, USA, {\tt David.Gutman@ttu.edu}}
 \and
Javier F. Pe\~na\thanks{Tepper School of Business,
Carnegie Mellon University, USA, {\tt jfp@andrew.cmu.edu}}
}
\begin{document}

\maketitle

\begin{abstract}

We show that the iterates generated by a generic {\em first-order meta-algorithm}  satisfy a canonical {\em perturbed} Fenchel duality inequality.  The latter in turn readily yields a unified derivation  of the best known convergence rates for various popular first-order algorithms including the conditional gradient method as well as the main kinds of Bregman proximal methods: subgradient, gradient, fast gradient, and universal gradient methods.

\end{abstract}
\section{Introduction}

The central goal of this paper is to highlight an interesting connection between two major threads in convex optimization, namely {\em first-order methods} and {\em Fenchel duality}.  More precisely, we show that a generic {\em first-order meta-algorithm} generates iterates that satisfy a canonical {\em perturbed} Fenchel duality property.  The meta-algorithm relies on some {\em flexibly selected sequence} of points $\{y_k: k=0,1,\dots\}$.  Specific choices of the sequence $\{y_k: k=0,1,\dots\}$ turn our first-order meta-algorithm into each of the following popular first-order methods: conditional gradient, Bregman proximal gradient, Bregman proximal subgradient, fast Bregman proximal gradient, and universal Bregman proximal gradient.
The perturbed Fenchel duality property in turn yields a generic convergence result for the meta-algorithm and thereby a unified approach to derive the best known convergence rates for all of the above popular first-order algorithms.

Each of our two main results (Theorem~\ref{thm.bregman.subgrad} and Theorem~\ref{thm.bregman.grad}) provides a duality relationship between a {\em primal average sequence} and a {\em dual average sequence} of the iterates generated by our first-order meta-algorithm.  These kinds of average sequences underlie (sometimes implicitly) the construction and analyses of  several first-order algorithms, prime examples being the analysis of the mirror descent method~\cite{NemiY83} and the dual averaging method of Nesterov~\cite{Nest05}.  It is noteworthy that Theorem~\ref{thm.bregman.subgrad} and Theorem~\ref{thm.bregman.grad} do not depend on any assumptions concerning Lipschitz continuity or strong convexity. Indeed, our main developments in Section~\ref{sec.bregman} do not rely on any norms at all.
We build on and extend ideas introduced in~\cite{GutmP18,Pena17}.  However, unlike these predecessors that apply only to Euclidean proximal methods, the main results in this paper (Theorem~\ref{thm.bregman.subgrad} and Theorem~\ref{thm.bregman.grad}) apply in the more general context of Bregman proximal methods and hold under minimal assumptions.

Our work is related to a variety of recent articles that aim to add intuition,  explain, and unify the convergence of first-order methods.  Some of these approaches are based on continuous-time dynamics~\cite{DiakO19,SuBC14}, control theory~\cite{LessRP16}, online optimization~\cite{AberW17,Aber18,WangA18}, and geometric~\cite{BubeLS15,DrusFR16}  techniques.  In contrast to these approaches, our results apply in the broad context of Bregman proximal methods as developed in the recent articles~\cite{BausBT16,HanzRX18,LuFN18,Tebo18, Drago19} and are predicated upon a perturbed Fenchel duality property.

Our developments facilitate a unified analysis of not only the Bregman gradient and subgradient schemes as discussed in~\cite{Tebo18}, but also the universal proximal gradient scheme~\cite{Nest15}, the recent accelerated Bregman method of~\cite{HanzRX18}, and the conditional gradient method~\cite{Bach15,Jagg13}.  The conditional gradient method corresponds to a special and particularly simple case of our first-order meta-algorithm by setting a suitable reference function identically to zero.  This simple connection between the conditional gradient method and Bregman proximal methods appears to have been previously overlooked. 

 As a nice byproduct of our approach, we readily obtain convergence properties for the conditional gradient method that are novel and stronger along several dimensions.  First, we consider a composite minimization problem as the one considered in~\cite{Nest18} but without any boundedness assumption on the domain.   The composite format includes as a special case the common format of convex minimization over a convex compact domain that is typically discussed in the literature on the conditional gradient method~\cite{Bach15,Beck17,FreuG16,Jagg13}.  Second, we provide a bound on the {\em Fenchel duality gap} between the main (primal) iterates and some canonical dual iterates.  Our bound applies to a composite objective and thus is more general than the kind of duality gap bound discussed in~\cite{FreuG16,Jagg13} that only applies to smooth convex minimization over a compact convex domain. Third, we derive a rate of convergence that relies on a  {\em curvature condition} that generalizes the {\em curvature constant} introduced by Jaggi~\cite{Jagg13}.   Fourth, our approach naturally suggests a line-search strategy for selecting the stepsize at each main iteration.  This line-search strategy yields a novel linear convergence result on the duality gap when the components of the composite objective satisfy a type of joint smoothness and strong convexity assumption.

\subsection{Main contribution: perturbed Fenchel duality}

Consider the convex minimization problem
\begin{equation}\label{eq.primal}
\dmin_{x\in \E}  f(Ax) + \Psi(x)
\end{equation}
and its Fenchel dual
\begin{equation}\label{eq.dual}
\dmax_{u\in \F^*}  -f^*(u) - \Psi^*(-A^*u),
\end{equation}
where $\E$ and $\F$ are real vector spaces, $A:\E\rightarrow \F$ is a linear mapping, and $f:\F\rightarrow \R\cup\{\infty\}$  and $\Psi: \E\rightarrow \R\cup\{\infty\}$ are closed convex functions. The spaces $\E^*$ and $\F^*$ are the continuous dual vector spaces of $\E$ and $\F$, and the functions $f^*:\F^*\rightarrow \R\cup\{\infty\}$ and $\Psi^*: \E^*\rightarrow \R\cup\{\infty\}$ are the convex conjugates of $f$ and $\Psi$ respectively, namely,
\[
f^*(u) = \sup_{y\in \F} \{\ip{u}{y} - f(y)\},\; \; 
\Psi^*(v) = \sup_{x\in \E} \{\ip{v}{x} - f(x)\}.
\]

It is known and easy to see, as detailed  in~\cite{BausC11,BorwL00,HiriL93,Rock70}, that {\em weak duality} holds between the primal and dual problems~\eqref{eq.primal} and~\eqref{eq.dual}, that is, for all $x\in \E$ and $u\in \F^*$
\begin{equation}\label{eq.weak.dual}
f(Ax) + \Psi(x) \ge  -f^*(u) - \Psi^*(-A^*u).
\end{equation}

From~\eqref{eq.weak.dual} it readily follows that $\bar x \in \E$ and $\bar u\in \F^*$ are  optimal solutions to~\eqref{eq.primal} and~\eqref{eq.dual} respectively if the following Fenchel duality identity holds
\begin{equation}\label{eq.duality}
f(A\bar x) + \Psi(\bar x) + f^*(\bar u) + \Psi^*(-A^*\bar u) = 0.
\end{equation}

The central contributions of this paper (Theorem~\ref{thm.bregman.subgrad} and Theorem~\ref{thm.bregman.grad}) show that a generic {\em first-order meta-algorithm} (Algorithm \ref{algo.bregman}), which includes a wide class of first-order methods, generates {\em average sequences} $x_k \in \E$ and $u_k\in \F^*$ that satisfy the following perturbed version of \eqref{eq.duality} for some $\delta_k \ge 0$ and some $d_k:\E\rightarrow \R_+\cup\{\infty\}$ 
\begin{equation}\label{eq.pert.duality}
f(Ax_k) + \Psi(x_k) + f^*(u_k) + (\Psi+d_k)^*(-A^*u_k) \le \delta_k.
\end{equation}

Inequality~\eqref{eq.pert.duality} can be considered a type of {\em perturbed Fenchel duality} property.
This property automatically yields the generic convergence results~\eqref{eq.gap} and~\eqref{eq.pert.duality.again} below.  Observe that 
for all $x\in \E$ we have 
\begin{align*}
f^*(u_k) + (\Psi+d_k)^*(-A^*u_k) &\ge \ip{u_k}{Ax} - f(Ax) + \ip{-A^*u_k}{x} - \Psi(x) - d_k(x) \\
&=  - f(Ax)  - \Psi(x) - d_k(x).
\end{align*}
Therefore the perturbed Fenchel duality property~\eqref{eq.pert.duality} implies that for all $x\in \E$
\begin{equation}\label{eq.gap}
f(Ax_k) + \Psi(x_k) - (f(Ax) + \Psi(x))\le  d_k(x) + \delta_k
\end{equation}
and so $f(Ax_k) + \Psi(x_k)$ converges to $\min_{x\in\E}\{f(Ax)+\Psi(x)\}$ provided both $d_k$ and $\delta_k$ converge to zero.

\medskip

The perturbed Fenchel duality property~\eqref{eq.pert.duality} also yields the following duality gap bound on a sequence of perturbed problems. By adding $d_k(x_k)$ to both sides of~\eqref{eq.pert.duality}, we can rewrite~\eqref{eq.pert.duality} as follows
\begin{equation}\label{eq.pert.duality.again}
f(Ax_k) + (\Psi+d_k)(x_k) + f^*(u_k) + (\Psi+d_k)^*(-A^*u_k) \le d_k(x_k) + \delta_k.
\end{equation}
This gives a generic duality gap bound for the following perturbation of the original problem~\eqref{eq.primal} 
\[
\min_{x\in \E} f(Ax) + (\Psi+d_k)(x)
\]
and its dual
\[
\max_{u\in \F^*} -f^*(u) - (\Psi+d_k)^*(-A^*u).
\]
The duality gap bound~\eqref{eq.pert.duality.again} automatically implies that both $f(Ax_k)+\Psi(x_k)$ and $-f^*(u_k) - \Psi^*(-A^*u_k)$ converge to  $\min_{x\in\E}\{f(Ax)+\Psi(x)\} = \max_{u\in\F^*}\{-f^*(u) - \Psi^*(-A^*u)\}$ provided both $d_k$ and $\delta_k$ converge to zero.

\medskip

  Theorem~\ref{thm.bregman.subgrad} and Theorem~\ref{thm.bregman.grad} exhibit explicit expressions for the objects $x_k\in \E, u_k\in \E^*, \delta_k \in \R_+,$ and $d_k:\E\rightarrow \R_+\cup\{\infty\}$ that appear in~\eqref{eq.pert.duality} in terms of the steps that the meta-algorithm performs. 
More precisely, equation~\eqref{eq.dk} shows that 
\[
d_k(x) = \frac{D_h(x,x_0)}{\sum_{i=0}^{k-1} t_i}
\]
where $D_h$ is the Bregman distance generated by some reference function $h$, $x_0$ is the initial point for the meta-algorithm, and $\{t_i:i=0,1,\dots\}$ are stepsizes that the meta-algorithm chooses at each main iteration.  Equations~\eqref{eq.pertdual.subgrad} in Theorem~\ref{thm.bregman.subgrad} and~\eqref{eq.pertdual.grad} in Theorem~\ref{thm.bregman.grad} provide a similar expression for $\delta_k$ as a fraction where the denominator is also $\sum_{i=0}^{k-1} t_i$ and the numerator depends on both $\{t_i:i=0,1,\dots\}$ and $\{y_i: i=0,1,\dots\}.$  Therefore the following key insight is readily revealed: 
both $d_k$ and $\delta_k$ in~\eqref{eq.pert.duality} can be driven to zero provided the meta-algorithm makes judicious choices of  $\{t_i: i=0,1,\dots\}$ and $\{y_i: i =0,1,\dots\}$ so that the denominator $\sum_{i=0}^{k-1} t_i$  grows sufficiently faster than the numerator in each of the fractions
in~\eqref{eq.dk},~\eqref{eq.pertdual.subgrad}, and~\eqref{eq.pertdual.grad}.

\medskip

As detailed in Section~\ref{sec.algos}, we leverage the perturbed Fenchel duality results (Theorem~\ref{thm.bregman.subgrad} and Theorem~\ref{thm.bregman.grad}) to provide  a unified derivation of the best known convergence rates of the most popular first-order algorithms,
 namely:  the conditional gradient method~\cite{Bach15,Jagg13}, the Bregman proximal gradient and subgradient algorithms~\cite{BausBT16,BeckT03,Lu17,LuFN18,Tebo18}, and the fast and universal Bregman proximal gradient algorithms~\cite{BeckT09,HanzRX18,Nest13,Nest15}.  Each of these algorithms corresponds to a special case of our first-order meta-algorithm for some specific choice of the sequence $\{y_k:k=0,1,\dots\}$.  Furthermore, the convergence rate of each of these algorithms follows from the above perturbed Fenchel duality approach by driving $d_k$ and $\delta_k$ to zero in some particular way.  
 
 Section~\ref{sec.condgrad} discusses a generalized conditional subgradient method.  This is a special case of our first-order meta-algorithm where $h\equiv 0$ and thus $d_k \equiv 0$.  For this method the stepsizes can be chosen so that $\delta_k = \Oh(1/k^\nu)$ and thus the duality gap converges to zero at the rate $\Oh(1/k^\nu)$ provided a suitable curvature condition~\eqref{eq.curv.nu} holds for some $\nu > 0$ (see  Proposition~\ref{prop.condgrad}) and a suitable choice of stepsizes.   The popular convergence rate $\Oh(1/k)$ of the conditional gradient method discussed in~\cite{Beck17,FreuG16,Jagg13} can be recovered as a special case of Proposition~\ref{prop.condgrad} when $\nu=1$. Most interestingly,  when a suitable joint smoothness and strong convexity condition~\eqref{eq.smooth.conv} holds, we show that the duality gap converges linearly to zero for properly chosen stepsizes (see Proposition~\ref{prop.condgrad.linear}).

Section~\ref{sec.grad} discusses the Bregman gradient method.  For this method the stepsizes can be chosen so that both $1/\sum_{i=0}^{k-1} t_i = \Oh(1/k)$ and $\delta_k = 0$ provided a suitable relative smoothness condition~\eqref{eq.smooth.1} holds.  The iconic $\Oh(1/k)$ convergence rate from~\cite{BausBT16,LuFN18,Tebo18} thus readily follows (see  Proposition~\ref{prop.grad}).  Section~\ref{sec.subgrad} discusses the Bregman subgradient method.  For this method we show that $\delta_k$ can be bounded above by an expression of the form $\Oh\left(\sum_{i=0}^{k-1} t_i^2/\sum_{i=0}^{k-1} t_i\right)$ provided a suitable relative continuity condition~\eqref{eq.relcont} holds (see Proposition~\ref{prop.subgrad}).  The iconic $\Oh(1/\sqrt{k})$ convergence rate of the subgradient method~\cite{Lu17,Tebo18} thus follows by choosing the first $k$ stepsizes $t_0,\dots,t_{k-1}$ all equal to a constant multiple of $1/\sqrt{k}$.   

Section~\ref{sec.fast.grad} discusses the fast Bregman gradient method.  In this case the stepsizes can be chosen so that both $1/\sum_{i=0}^{k-1} t_i = \Oh(1/k^\gamma)$ and $\delta_k = 0$ provided a suitable relative smoothness condition~\eqref{eq.smooth.2} holds for some $\gamma > 1$ (see Proposition~\ref{prop.fast}).  The iconic $\Oh(1/k^2)$ convergence rate of fast proximal gradient methods~\cite{BeckT09,Nest83,Nest13} as well as the more recent related results in~\cite{HanzRX18} can be obtained as a special case of 
Proposition~\ref{prop.fast}.  Section~\ref{sec.universal} discusses the universal Bregman gradient method.  In this case for any $\epsilon > 0$ the stepsizes can be chosen so that both $1/\sum_{i=0}^{k-1} t_i = \Oh(1/\epsilon^{\frac{1-\nu}{1+\nu}}k^{\frac{1+3\nu}{1+\nu}})$ and $\delta_k = \epsilon$ provided a suitable relative smoothness condition~\eqref{eq.smooth.3} holds for some $\nu \in [0,1]$ (Proposition~\ref{prop.univ}).  The optimal $\Oh(1/k^{\frac{1+3\nu}{2}})$ universal convergence rate for minimization of smooth convex functions with $\nu$-H\"older continuous gradient~\cite{Nest15} follows as a special case of Proposition~\ref{prop.univ}.

\subsection{Organization of the paper}

The main sections of the paper are organized as follows.  Section \ref{sec.bregman} presents our central developments.  This section describes a first-order meta-algorithm (Algorithm \ref{algo.bregman}) and its perturbed Fenchel duality properties (Theorem~\ref{thm.bregman.subgrad} and Theorem~\ref{thm.bregman.grad}).  Section~\ref{sec.bregman} also states a generic convergence result for Algorithm~\ref{algo.bregman} (Corollary~\ref{cor.conv.generic}).  Section \ref{sec.algos} leverages the perturbed Fenchel duality properties of Section~\ref{sec.bregman} to provide succinct and unified derivations of the best known convergence rates for a variety of popular first-order algorithms. Section \ref{sec.proofs} presents the proofs of our main results, Theorem~\ref{thm.bregman.subgrad} and Theorem~\ref{thm.bregman.grad}.

Our exposition is self-contained.  We rely primarily on standard convex analysis techniques as discussed in~\cite{BausC11,BorwL00,HiriL93,Rock70}. 
The seed for this paper was the technical report~\cite{GutmP18b}, which can be considered an initial draft of this work.  The exposition and main ideas first laid out in~\cite{GutmP18b} have been substantially re-organized, cleaned up, and extended.

\section{First-order meta-algorithm}
\label{sec.bregman}

Suppose $\E$ and $\F$ are real vector spaces, $A:\E \rightarrow \F$ is a linear mapping, and $f:\F\rightarrow \R\cup\{\infty\}$  and $\Psi: \E\rightarrow \R\cup\{\infty\}$ are closed convex functions such that $\dom(\Psi) \subseteq \dom(\partial (f\circ A))$. Furthermore, suppose $h:\E\rightarrow \R\cup\{\infty\}$ is a convex differentiable {\em reference function} such that $\dom(\Psi) \subseteq \overline{\dom(h)}$.
Algorithm~\ref{algo.bregman} describes a first-order meta-algorithm for problem~\eqref{eq.primal}.

The key step of Algorithm~\ref{algo.bregman}, namely Step 5, hinges on the {\em Bregman proximal mapping}
\begin{equation}\label{eq.prox.map}
(s_-,g) \mapsto \argmin_s\{t (\ip{A^*g}{s} + \Psi(s)) + D_h(s,s_{-})\}.
\end{equation}
Here $D_h$ is the {\em Bregman} distance generated by the reference function $h$, that is,
\[
D_h(s,z) = h(s) - h(z) - \ip{\nabla h(z)}{s-z}.
\]
We note that for additional flexibility, we relax the common strict convexity requirement of the reference function $h$. This flexibility enables us to use linear oracles to solve~\eqref{eq.prox.map} when $h\equiv 0$ and thus includes the conditional gradient method in our framework.  Since $h$ is not required to be strictly convex, the minimizer of the problem~\eqref{eq.prox.map} may not be unique.  Step 5 of Algorithm~\ref{algo.bregman} does not require uniqueness of the solution to~\eqref{eq.prox.map}.  Instead it only relies on the assumption that {\em a solution} to~\eqref{eq.prox.map} is computable.  Algorithm~\ref{algo.bregman} also relies on a flexibly selected sequence $\{y_k:k=0,1,\dots\}$.  This sequence in turn yields the input $g_k\in\partial f(Ay_k)$ used to generate the main sequence of iterates $\{s_k: k=0,1,\dots\}$ via~\eqref{eq.prox.map}.  Section~\ref{sec.algos} below details how specific choices of $\{y_k:k=0,1,\dots\}$ turn Algorithm~\ref{algo.bregman} into each of the following popular first-order methods: conditional gradient, Bregman proximal gradient and subgradient, and Bregman fast and universal proximal gradient.

\begin{algorithm}
\caption{First-order meta-algorithm}\label{algo.bregman}
\begin{algorithmic}[1]
	\STATE {\bf input:}  $(A,f,\Psi)$ and $s_{-1} \in \dom(\Psi) \cap \dom(h)$
	\FOR{$k=0,1,2,\dots$}
		\STATE pick $y_{k} \in \dom(\partial (f\circ A))$
		and $g_k \in \partial f(Ay_k)$
		\STATE pick $t_k > 0$ 
		\STATE pick $s_{k} \in \argmin_s\{t_k(\ip{A^*g_k}{s} + \Psi(s)) + D_h(s,s_{k-1})\}$
	\ENDFOR
\end{algorithmic}
\end{algorithm}

Algorithm~\ref{algo.bregman} naturally relies on the assumption that each instance of~\eqref{eq.prox.map} in Step 5 is well-posed and computable.  Indeed, we will make the following blanket
assumption throughout the paper.

\begin{assumption}\label{assump.blanket} At each iteration $k=0,1,\dots$ of Algorithm~\ref{algo.bregman}  Step 5 is well-posed, that is, a minimizer $s_{k} \in \argmin_s\{t_k(\ip{A^*g_k}{s} + \Psi(s)) + D_h(s,s_{k-1})\}$ exists, is computable, and satisfies the optimality conditions
\begin{equation}\label{eq.op.conds}
t_k(A^*g_k + g_k^\Psi) + \nabla h(s_{k}) - \nabla h(s_{k-1}) = 0
\end{equation}
for some $g^\Psi_k\in \partial\Psi(s_{k})$.
\end{assumption}

We should highlight that  the subgradient $g^\Psi_k$ in Assumption~\ref{assump.blanket} can be interpreted as a certificate of optimality for the solution $s_k \in \argmin_s\{t_k(\ip{A^*g_k}{s} + \Psi(s)) + D_h(s,s_{k-1})\}$.  This certificate is used in the construction of the sequence $w_k\in \E^*$ in equation~\eqref{eq.dk} below.  This sequence is a key component of our main results.

\medskip

Our main results (Theorem~\ref{thm.bregman.subgrad} and
Theorem~\ref{thm.bregman.grad}) show that  Algorithm~\ref{algo.bregman} yields average sequences of iterates that satisfy a perturbed Fenchel duality inequality of the form~\eqref{eq.pert.duality}.  It is worthwhile noting that these two results hold under the minimal assumptions stated above.  As we subsequently detail in Section~\ref{sec.algos}, several popular first-order algorithms including the conditional gradient, Bregman proximal gradient and subgradient algorithms, and fast and universal Bregman proximal gradient algorithms, can all be seen as special cases of Algorithm~\ref{algo.bregman}.  Most interesting, their convergence properties readily follow  the from the perturbed Fenchel results stated in Theorem~\ref{thm.bregman.subgrad} and
Theorem~\ref{thm.bregman.grad}.

Our perturbed Fenchel duality results concern the {\em primal average} sequences $z_k, x_k \in \E$, and the {\em dual average} sequence $u_k\in \F^*$ defined by Algorithm~\ref{algo.bregman} as follows
\begin{equation}\label{eq.xseq}
 x_k := 
\frac{\sum_{i=0}^{k-1} t_is_{i}}{\sum_{i=0}^{k-1} t_i}, \;
z_k := 
\frac{\sum_{i=0}^{k-1} t_iy_{i}}{\sum_{i=0}^{k-1} t_i}, \;\; 
u_k := \frac{\sum_{i=0}^{k-1} t_i g_i}{\sum_{i=0}^{k-1} t_i}.
\end{equation}
For notational convenience we let $x_0:=z_0 := s_{-1}$.
Our perturbed Fenchel duality results also rely on the sequences  $d_k:\E\rightarrow \R\cup\{\infty\}$ and $w_k\in \E^*$ defined as follows
\begin{equation}\label{eq.dk}
d_k(s):=\frac{D_h(s,s_{-1})}{\sum_{i=0}^{k-1} t_i}, \; w_k := \frac{\sum_{i=0}^{k-1} t_i(A^*g_i+g_i^\Psi)}{\sum_{i=0}^{k-1} t_i}.
\end{equation}

\begin{theorem}\label{thm.bregman.subgrad}
The iterates generated by Algorithm~\ref{algo.bregman} satisfy
\begin{multline}\label{eq.subgrad}
\frac{\sum_{i=0}^{k-1} t_i(f(Ay_{i}) + \Psi(y_{i}) + f^*(g_i) + \Psi^*(g_i^\Psi))
}{\sum_{i=0}^{k-1} t_i} + d_k^*(-w_k)
\\ 
=  \frac{\sum_{i=0}^{k-1}  t_i(\Psi(y_{i}) - \Psi(s_{i}) - \ip{A^*g_i}{s_{i}-y_{i}}) - D_h(s_{i},s_{i-1})}{\sum_{i=0}^{k-1} t_i}.
\end{multline}
In particular, the following perturbed Fenchel duality property holds
\begin{equation}\label{eq.pert.dual.2}
f(Az_k) + \Psi(z_k) + f^*(u_k) + (\Psi+d_k)^*(-A^*u_k) \le \delta_k
\end{equation}
for
\begin{equation}\label{eq.pertdual.subgrad}
\delta_k =  \frac{\sum_{i=0}^{k-1}  t_i(\Psi(y_{i}) - \Psi(s_{i}) - \ip{A^*g_i}{s_{i}-y_{i}}) - D_h(s_{i},s_{i-1})}{\sum_{i=0}^{k-1} t_i}.
\end{equation}
\end{theorem}

As we detail in Section~\ref{sec.algos} below, Theorem~\ref{thm.bregman.subgrad} yields the known convergence rates of the proximal subgradient method~\cite{Bell17,Lu17,Tebo18}.  Theorem~\ref{thm.bregman.subgrad} also yields our second  perturbed  Fenchel duality, namely Theorem~\ref{thm.bregman.grad} below.  The statement of Theorem~\ref{thm.bregman.grad} relies on two more pieces of notation. First, let $\theta_k \in [0,1], \; k=0,1,\dots$ be defined as follows
\begin{equation}\label{eq.theta}
\theta_k:= \frac{t_k}{\sum_{i=0}^k t_i}, \; k=0,1,\dots.
\end{equation}
The sequence $\theta_k\in [0,1], \; k=0,1,2,\dots$ can be seen as a {\em transformation} of the sequence $t_k \in (0,\infty),\; k=0,1,\dots$.  Indeed, the sequences $t_k > 0, \; k=0,1,\dots$ and $\theta_k\in [0,1], \; k=0,1,2,\dots$ are in one-to-one correspondence.  Observe that the output sequence $x_k,\; k=1,2,\dots$ defined via~\eqref{eq.xseq} satisfies
\[
x_{k+1} = (1-\theta_k)x_{k} + \theta_k s_k, \; k=0,1,2,\dots
\]
and a similar identity holds for $z_k,\; k=0,1,2,\dots$.

\medskip

Second, let $\D:\dom(\Psi)\times \dom(\partial (f\circ A)) \times \dom(\Psi) \times [0,1] \rightarrow \R$ be defined as follows
\begin{multline}\label{eq.D.def}
\D(x,y,s,\theta):=(f\circ A+\Psi)(x+\theta(s-x)) -
(1-\theta)(f\circ A+\Psi)(x) - \theta (f\circ A+\Psi)(s) \\
+\theta D_{f\circ A}(s,y).
\end{multline}
where $D_{f\circ A}(s,y) = f(As) - f(Ay) - \ip{g}{A(s-y)}$ for some  $g\in \partial f(Ay)$ that will always be clear from the context.
\begin{theorem}\label{thm.bregman.grad}
The iterates generated by Algorithm~\ref{algo.bregman} satisfy
\begin{multline}\label{eq.bregman.grad}
f(Ax_k) + \Psi(x_k) + \frac{\sum_{i=0}^{k-1} t_i(f^*(g_i) + \Psi^*(g_i^\Psi))
}{\sum_{i=0}^{k-1} t_i} + d_k^*(-w_k) \\
=
\frac{\sum_{i=0}^{k-1} \left(t_i \D(x_i,y_i,s_{i},\theta_i)/\theta_i -D_h(s_{i},s_{i-1})\right)}{\sum_{i=0}^{k-1} t_i}.
\end{multline}
In particular, the following perturbed Fenchel duality property holds
\begin{equation}\label{eq.pert.dual.2.2}
f(Ax_k) + \Psi(x_k) + f^*(u_k) + (\Psi+d_k)^*(-A^*u_k) \le \delta_k
\end{equation}
for
\begin{equation}
\label{eq.pertdual.grad}
\delta_k = \frac{\sum_{i=0}^{k-1}  \left(t_i\D(x_i, y_i,s_{i},\theta_i)/\theta_i - D_h(s_{i},s_{i-1})\right)}{\sum_{i=0}^{k-1} t_i}.
\end{equation}
\end{theorem}

  Theorem~\ref{thm.bregman.grad} readily yields the following generic convergence result for the iterates generated by Algorithm~\ref{algo.bregman}.

\begin{corollary}\label{cor.conv.generic}
Suppose the stepsizes $t_k$ chosen in Step 4 of Algorithm~\ref{algo.bregman} satisfy
\begin{equation}\label{eq.dc}
t_k\D(x_k, y_k,s_{k},\theta_k)/\theta_k - D_h(s_{k},s_{k-1}) \le \epsilon_k, \; k=0,1,2,\dots.
\end{equation}  
Then
\[
f(Ax_k) + \Psi(x_k) + f^*(u_k) + (\Psi+d_k)^*(-A^*u_k) \le \frac{\sum_{i=0}^{k-1}\epsilon_i}{\sum_{i=0}^{k-1} t_i}, \; k=1,2,\dots.
\]
In particular,
\begin{equation}\label{eq.conv.bound}
f(Ax_k) +\Psi(x_k) -  (f(Ax) + \Psi(x))\le  \frac{D_h( x, x_0) +\sum_{i=0}^{k-1} \epsilon_i }{\sum_{i=0}^{k-1} t_i}
\end{equation}
for all $x \in \E$.
\end{corollary}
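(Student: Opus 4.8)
The plan is to read off the corollary directly from Theorem~\ref{thm.bregman.grad} by specializing to the stepsize rule~\eqref{eq.dc}. First I would start from inequality~\eqref{eq.pertdual.grad} and bound its right-hand side termwise: the hypothesis~\eqref{eq.dc} says precisely that $t_i\D(x_i,y_i,s_i,\theta_i)/\theta_i - D_h(s_i,s_{i-1}) \le \epsilon_i$ for every $i = 0,1,\dots,k-1$, so summing these inequalities and dividing by $\sum_{i=0}^{k-1} t_i$ gives
$f(Ax_k)+\Psi(x_k)+f^*(u_k)+(\Psi+d_k)^*(-A^*u_k) \le \bigl(\sum_{i=0}^{k-1}\epsilon_i\bigr)/\bigl(\sum_{i=0}^{k-1} t_i\bigr)$, which is the first displayed claim.

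For the ``in particular'' part I would invoke the elementary consequence~\eqref{eq.gap} of the perturbed Fenchel duality~\eqref{eq.pert.duality}. Applying it with the perturbation data just identified, namely $\delta_k = \bigl(\sum_{i=0}^{k-1}\epsilon_i\bigr)/\bigl(\sum_{i=0}^{k-1} t_i\bigr)$ and $d_k$ as in~\eqref{eq.dk}, yields $f(Ax_k)+\Psi(x_k)-(f(Ax)+\Psi(x)) \le d_k(x)+\delta_k$ for every $x\in\E$. By the definition of $d_k$ in~\eqref{eq.dk} together with the convention $x_0 := s_{-1}$ we have $d_k(x) = D_h(x,s_{-1})/\sum_{i=0}^{k-1} t_i = D_h(x,x_0)/\sum_{i=0}^{k-1} t_i$; placing both terms over the common denominator $\sum_{i=0}^{k-1} t_i$ gives exactly~\eqref{eq.conv.bound}.

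I do not expect a genuine obstacle here: the corollary is pure bookkeeping on top of Theorem~\ref{thm.bregman.grad} and the Fenchel--Young estimates already recorded in the derivation of~\eqref{eq.gap}. The only minor points worth stating explicitly are that $d_k \ge 0$, which is immediate since convexity of the reference function $h$ forces $D_h \ge 0$, and that the inequality~\eqref{eq.gap} is being applied to the primal average iterate $x_k$ defined in~\eqref{eq.xseq} with $u_k$ the dual average from~\eqref{eq.xseq}, which is exactly the pair appearing in~\eqref{eq.pertdual.grad}.
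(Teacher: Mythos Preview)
Your proposal is correct and matches the paper's approach: the paper does not spell out a proof of the corollary but states that it ``readily'' follows from Theorem~\ref{thm.bregman.grad}, and the natural reading is exactly the two-step argument you give---first bound the right-hand side of~\eqref{eq.pertdual.grad} termwise using~\eqref{eq.dc}, then invoke the Fenchel--Young estimate leading to~\eqref{eq.gap} with $d_k$ as in~\eqref{eq.dk} and the convention $x_0 = s_{-1}$.
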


For ease of exposition we defer the proofs of Theorem~\ref{thm.bregman.subgrad} and Theorem~\ref{thm.bregman.grad} to Section~\ref{sec.proofs}.  Section~\ref{sec.algos} below shows that the convergence rates of several popular first-order algorithms can be obtained from Theorem~\ref{thm.bregman.subgrad}, Theorem~\ref{thm.bregman.grad}, and Corollary~\ref{cor.conv.generic}.   

\section{Popular first-order algorithms}
\label{sec.algos}

We next detail how several popular first-order algorithms and their convergence properties can be seen as instances of Algorithm~\ref{algo.bregman} and the perturbed Fenchel duality results established in Section~\ref{sec.bregman}.   For ease of notation and to be consistent with the most common format  in the literature on first-order methods, throughout this section we consider the special case when $\F = \E$ and $A$ is the identity mapping. In other words, we consider the convex minimization problem
\begin{equation}\label{eq.primal.special}
\dmin_{x\in \E}  \, (f+ \Psi)(x) \Leftrightarrow \dmin_{x\in \E}  f(x) + \Psi(x).
\end{equation}
All of the developments in this section extend in a straightforward fashion to the more general problem~\eqref{eq.primal} albeit with a bit of notation overhead.

A critical detail in all of the convergence results discussed below is the choice of stepsize $t_k$ in Step 4 of Algorithm~\ref{algo.bregman}.  To that end, we will rely on the following terminology and  assumption.

\begin{definition}
 We shall say that $t > 0$ is {\em admissible} for $(s_{-}, g)$ if 
\begin{equation}\label{eq.subproblem}
\min_s \{t(\ip{g}{s} + \Psi(s)) + D_h(s,s_{-})\}
\end{equation}
is well-posed and it has a solution~$\bar s$ that satisfies the optimality conditions
\begin{equation}\label{eq.op.conds.again}
t(A^*g + g^\Psi) + \nabla h(\bar s) - \nabla h(s_-) = 0
\end{equation}
for some $g^\Psi\in \partial\Psi(\bar s)$.
\end{definition}

We will make the following assumption throughout this section.


\begin{assumption}\label{assump.strong} The functions $f,\Psi,h$ satisfy the following three conditions:
\begin{itemize}
\item[(a)] The function $f + \Psi$ is bounded below.
\item[(b)] 
If $t > 0$ and $(s_-,g)$ are such that the mapping 
\[
s\mapsto t(\ip{g}{s} + \Psi(s)) + D_h(s,s_{-})
\]
is bounded below then $t$ is {\em admissible} for $(s_{-}, g)$.  
\item[(c)] There exists an oracle that takes as input a tuple 
$(t,s_{-},  g)$ and either determines that $t$ is not admissible  for $(s_{-}, g)$ or computes an optimal solution of~\eqref{eq.subproblem} that satisfies the optimality conditions~\eqref{eq.op.conds}.
\end{itemize}
\end{assumption}

We bifurcate our analysis into two main cases based on the reference function $h$. Subsection \ref{sec.condgrad} discusses a {\em generalized conditional subgradient algorithm,} which is a special instance of Algorithm \ref{algo.bregman} where the reference function $h$ is identically zero.  Subsections \ref{sec.bregman.prox} and \ref{sec.bregman.fast} are devoted to {\em Bregman proximal methods} which are instances of Algorithm \ref{algo.bregman} where $h$ is a general (typically nonzero) convex differentiable reference function.

\subsection{Generalized conditional subgradient algorithm ($h\equiv 0$)}\label{sec.condgrad}

Algorithm~\ref{algo.condgrad} describes a generalized conditional subgradient algorithm for problem~\eqref{eq.primal.special}.  This format is the same one developed by Nesterov~\cite{Nest18} but without any boundedness assumption on the domain. The usual conditional gradient method~\cite{Bach15,Beck17,Jagg13}, also known as Frank-Wolfe method, corresponds to the case when $\Psi$ is the indicator function $i_C$ of some compact convex set $C\subseteq \E$ equipped with a computable linear oracle, and $f$ is differentiable. Algorithm~\ref{algo.condgrad} does not require  $\Psi$ to be of this form.  Instead, it requires nothing more than the specialization of our blanket admissibility assumption (Assumption~\ref{assump.blanket}) to the case when $h\equiv 0$.

\begin{assumption}\label{assump.condgrad}
At each iteration $k=0,1,\dots$ of Algorithm~\ref{algo.condgrad}  Step 4 is well-posed, that is, a minimizer $s_{k} \in \argmin_s\{\ip{g_k}{s} + \Psi(s)\}$ exists and is computable.
\end{assumption}  
\noindent
We note that Assumption~\ref{assump.condgrad} holds if $\dom(\Psi^*) = \E^*$ and there exists a computable oracle for $\partial \Psi^*$.  

\medskip

We shall also make the mild assumption that $\theta_0=1$ and $\theta_k\in (0,1),\; k=1,\dots$ in Algorithm~\ref{algo.condgrad}.  Under this assumption, it is evident that Algorithm~\ref{algo.condgrad} can be obtained as a special case of 
Algorithm~\ref{algo.bregman} by picking $ h\equiv 0, \; s_{-1}:=x_0, \; y_k = x_{k},$ and $t_k> 0, \; k=0,1,\dots$ so that~\eqref{eq.theta} holds.

\begin{algorithm}
\caption{Generalized conditional subgradient algorithm}\label{algo.condgrad}
\begin{algorithmic}[1]
	\STATE {\bf input:}  $(f,\Psi)$ and $x_{0} \in \dom(\partial f)$
	\FOR{$k=0,1,2,\dots$}
		\STATE pick $g_k \in \partial f(x_k)$
		\STATE pick $s_{k} \in \argmin_s\{\ip{g_{k}}{s} + \Psi(s)\}$
		\STATE pick $\theta_k \in [0,1]$
		\STATE let $x_{k+1} = (1-\theta_k) x_k + \theta_k s_k$
	\ENDFOR
\end{algorithmic}
\end{algorithm}
Since $h \equiv 0$ it follows that $D_h \equiv 0$ so~\eqref{eq.op.conds} becomes  $g_k + g_k^\Psi = 0$.  Hence $w_k = 0$ and $d_k^*(-w_k) = 0$ for $k=0,1,\dots$. Thus Theorem~\ref{thm.bregman.grad} implies that for $k=1,2,\dots$
\begin{align}\label{eq.gcg}
f(x_k) + \Psi(x_k) + f^*(u_k) + \Psi^*(-u_k)
&\le \frac{\sum_{i=0}^{k-1}  t_i D(x_i,s_{i},\theta_i)/\theta_i}{\sum_{i=0}^{k-1} t_i} \notag
\\
&= \frac{\sum_{i=0}^{k-1} \sum_{j=0}^{i} t_j D(x_i,s_{i},\theta_i)}{\sum_{i=0}^{k-1} t_i},
\end{align}
where $D: \dom(\Psi)\times \dom(\Psi)\times [0,1] \rightarrow \R$ is the simplified version of  $\D$ defined as follows
\begin{equation}\label{eq.D.def.simple}
D(x,s,\theta):=D_f(x+\theta(s-x),x) + \Psi(x+\theta(s-x)) - 
(1-\theta)\Psi(x) - \theta\Psi(s). 
\end{equation}
Observe that~\eqref{eq.gcg} can be written as
\begin{equation}\label{eq.gcggap.val}
f(x_k) + \Psi(x_k) + f^*(u_k) + \Psi^*(-u_k)
\le \gcggap_k
\end{equation}
where $\gcggap_k, \; k=1,2,\dots$ is defined via $\gcggap_1 = D(x_0,s_0,1) = D_f(s_0,x_0)$ and 
\begin{equation}\label{eq.gcggap}
\gcggap_{k+1} = (1-\theta_k)\gcggap_k 
+ D(x_{k},s_k,\theta_k),
\; k=1,2,\dots.
\end{equation}
We will consider the case when $f,\Psi$ satisfy the following kind of  {\em curvature condition:} there exists a constant $M > 0$ and $\nu > 0$ such that  for  $x,s\in \dom(\Psi)$ 
\begin{equation}\label{eq.curv.nu}
D(x,s,\theta) \le \frac{M\theta^{1+\nu}}{1+\nu} \text{ for all } \theta \in [0,1].
\end{equation} 
The curvature condition~\eqref{eq.curv.nu} holds in particular when $\dom(\Psi)$ is bounded and $\nabla f$ satisfies  the following H\"olderian continuity assumption for some norm in $\E$: for all $x,y\in \dom(f)$
\begin{equation}\label{eq.holder}
\|\nabla f(x) - \nabla f(y)\|^* \le M\|x-y\|^\nu.
\end{equation}

The curvature condition~\eqref{eq.curv.nu} is inspired by and generalizes the following {\em curvature constant} defined by Jaggi~\cite{Jagg13} in the special case when $f$ is differentiable and $\Psi= i_C$ for some compact convex set $C\subseteq \E$: 
\[
\mathcal C_f:=\sup_{x,s\in C\atop \theta\in(0,1]}\frac{2(f(x+\theta(s-x)) - f(x) - \theta\ip{\nabla f(x)}{s-x})}{\theta^2} = \sup_{x,s\in C\atop \theta\in(0,1]}\frac{2D_f(x+\theta(s-x),x) }{\theta^2}.
\]
Indeed, $\mathcal C_f$ is the smallest constant such that for $x,s \in C$
\begin{equation}\label{eq.curv.2}
D_f(x+\theta(s-x),x) \le \frac{\mathcal C_f\theta^2}{2} \text{ for all } \theta \in [0,1].
\end{equation}
Observe that~\eqref{eq.curv.2} is identical to~\eqref{eq.curv.nu} when $\nu=1$ and $\Psi = i_C$.

The proof of our ultimate proposition for the generalized conditional subgradient method will rely on the following  {\em weighted arithmetic-geometric mean (AM-GM) inequality:} If $a,b > 0$ and $\alpha,\beta \ge0$ are such that $\alpha + \beta > 0$ then
\begin{equation}\label{eq.amgm}
a^\alpha b^\beta \le \left(\frac{\alpha a + \beta b}{\alpha + \beta}\right)^{\alpha + \beta}.
\end{equation}
This inequality is also pivotal to our analysis of the fast and universal Bregman proximal methods.

\begin{proposition}\label{prop.condgrad}
Suppose $f,\Psi$ satisfy~\eqref{eq.curv.nu} and Step 5 of Algorithm~\ref{algo.condgrad} chooses $\theta_k = \frac{1+\nu}{k+1+\nu}$. 
Then the iterates $x_k,\; k=1,2,\dots$ generated by Algorithm~\ref{algo.condgrad} satisfy
\begin{equation}\label{eq.condgrad}
f(x_k) + \Psi(x_k) + f^*(u_k) + \Psi^*(-u_k) \le M\left(\frac{1+\nu}{k+1+\nu}\right)^\nu.
\end{equation}
\end{proposition}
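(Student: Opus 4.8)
The plan is to bound the quantity $\gcggap_k$ defined by the recursion~\eqref{eq.gcggap} and then invoke~\eqref{eq.gcggap.val}. First I would substitute the curvature condition~\eqref{eq.curv.nu} into the recursion: since $D(x_k, s_k, \theta_k) \le \frac{M\theta_k^{1+\nu}}{1+\nu}$, the recursion~\eqref{eq.gcggap} gives
\[
\gcggap_{k+1} \le (1-\theta_k)\gcggap_k + \frac{M\theta_k^{1+\nu}}{1+\nu}, \qquad k=1,2,\dots,
\]
with $\gcggap_1 = D(x_0, s_0, 1) \le \frac{M}{1+\nu}$. With the prescribed choice $\theta_k = \frac{1+\nu}{k+1+\nu}$, I would prove by induction on $k$ that $\gcggap_k \le M\left(\frac{1+\nu}{k+1+\nu}\right)^\nu$. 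Combined with~\eqref{eq.gcggap.val}, this yields~\eqref{eq.condgrad}.

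For the induction, the base case $k=1$ amounts to checking $\frac{M}{1+\nu} \le M\left(\frac{1+\nu}{2+\nu}\right)^\nu$, i.e.\ $\left(\frac{2+\nu}{1+\nu}\right)^\nu \le 1+\nu$; this follows from the AM--GM inequality~\eqref{eq.amgm} (or a direct estimate, since $\frac{2+\nu}{1+\nu} \le 1 + \frac{1}{1+\nu}$ and one can bound $(1 + \tfrac{1}{1+\nu})^\nu$). For the inductive step, assuming $\gcggap_k \le M\left(\frac{1+\nu}{k+1+\nu}\right)^\nu$, I would write $1-\theta_k = \frac{k}{k+1+\nu}$ and estimate
\[
\gcggap_{k+1} \le \frac{k}{k+1+\nu}\cdot M\left(\frac{1+\nu}{k+1+\nu}\right)^\nu + \frac{M}{1+\nu}\left(\frac{1+\nu}{k+1+\nu}\right)^{1+\nu} = \frac{M(1+\nu)^\nu}{(k+1+\nu)^{1+\nu}}\bigl(k + 1\bigr),
\]
and then it remains to show $\frac{k+1}{(k+1+\nu)^{1+\nu}} \le \frac{1}{(k+2+\nu)^\nu}$, equivalently $(k+1)(k+2+\nu)^\nu \le (k+1+\nu)^{1+\nu}$. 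This is where the weighted AM--GM inequality~\eqref{eq.amgm} does the real work: writing the right side as a product $(k+1+\nu)\cdot(k+1+\nu)^\nu$ and applying~\eqref{eq.amgm} with $a = k+1$, $b = k+2+\nu$, $\alpha = 1$, $\beta = \nu$ gives $(k+1)(k+2+\nu)^\nu = a^\alpha b^\beta \le \left(\frac{a + \nu b}{1+\nu}\right)^{1+\nu} = \left(\frac{(k+1) + \nu(k+2+\nu)}{1+\nu}\right)^{1+\nu}$, and a short computation shows the numerator equals $(1+\nu)(k+1+\nu) + \nu^2 \ge (1+\nu)(k+1+\nu)$, so the bound $\le (k+1+\nu)^{1+\nu}$ follows. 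Wait — I should double-check the arithmetic here; the exact numerator is $(k+1)+\nu(k+2+\nu) = (1+\nu)k + 1 + 2\nu + \nu^2 = (1+\nu)(k+1+\nu) + \nu^2 \ge (1+\nu)(k+1+\nu)$, which is what we need.

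The main obstacle is the algebraic inequality $(k+1)(k+2+\nu)^\nu \le (k+1+\nu)^{1+\nu}$ in the inductive step: without the right gadget this looks awkward for general $\nu>0$, but the weighted AM--GM inequality~\eqref{eq.amgm} reduces it to the trivial observation $\nu^2 \ge 0$. The remaining pieces — setting up the recursion from~\eqref{eq.gcggap} and~\eqref{eq.curv.nu}, and the base case — are routine.
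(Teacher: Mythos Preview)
Your approach is exactly the paper's: bound $\gcggap_k$ by induction using the curvature condition~\eqref{eq.curv.nu} in the recursion~\eqref{eq.gcggap}, with the weighted AM--GM inequality~\eqref{eq.amgm} handling both the base case and the key algebraic inequality $(k+1)(k+2+\nu)^\nu \le (k+1+\nu)^{1+\nu}$ in the inductive step.

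There is, however, an arithmetic slip in your inductive step that, as written, breaks the logic. You compute the numerator $(k+1)+\nu(k+2+\nu)$ as $(1+\nu)(k+1+\nu)+\nu^2$, but in fact
\[
(k+1)+\nu(k+2+\nu) = (1+\nu)k + 1 + 2\nu + \nu^2 = (1+\nu)k + (1+\nu)^2 = (1+\nu)(k+1+\nu)
\]
exactly --- there is no extra $\nu^2$. This matters because your subsequent reasoning (``the numerator is $\ge (1+\nu)(k+1+\nu)$, so the bound $\le (k+1+\nu)^{1+\nu}$ follows'') goes the wrong way: a \emph{larger} numerator would make the AM--GM upper bound \emph{larger}, not smaller, and the induction would not close. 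With the correct arithmetic the AM--GM bound is \emph{exactly} $(k+1+\nu)^{1+\nu}$ and the inductive step goes through cleanly, just as in the paper. So the fix is trivial, but the argument as you wrote it does not stand.
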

\begin{proof}
By~\eqref{eq.gcggap.val} it suffices to show that
\begin{equation}\label{eq.gcg.induction}
\gcggap_{k} \le  M\left(\frac{1+\nu}{k+1+\nu}\right)^\nu, \; k=1,2,\dots.
\end{equation}
We proceed by induction.  For $k=1$ inequality~\eqref{eq.curv.nu} implies that
\[
\gcggap_1 = D(x_0,s_0,1) \le \frac{M}{1+\nu} \le M\left(\frac{1+\nu}{2+\nu}\right)^\nu.
\] 
The second step follows from $(2+\nu)^\nu \le (1+\nu)^{1+\nu}$ which in turn follows from the weighted AM-GM inequality~\eqref{eq.amgm} applied to $a= 1,b= 2+\nu,\alpha = 1, \beta = \nu$.

Suppose \eqref{eq.gcg.induction} holds for $k \ge 1.$ Then~\eqref{eq.gcggap},~\eqref{eq.curv.nu}, and $\theta_k = \frac{1+\nu}{k+1+\nu}$ imply that
\begin{align*}
\gcggap_{k+1} &= \left(1-\frac{1+\nu}{k+1+\nu}\right)\gcggap_k + \frac{M}{1+\nu}\left(\frac{1+\nu}{k+1+\nu}\right)^{1+\nu} \\
&\le \frac{Mk}{k+1+\nu}\left(\frac{1+\nu}{k+1+\nu}\right)^\nu  + \frac{M}{1+\nu}\left(\frac{1+\nu}{k+1+\nu}\right)^{1+\nu}\\
&= \frac{M(k+1)(1+\nu)^\nu}{(k+1+\nu)^{1+\nu}} \\
& \le M\left(\frac{1+\nu}{k+2+\nu}\right)^\nu.
\end{align*}
The last step follows from $(k+1)(k+2+\nu)^\nu \le (k+1+\nu)^{1+\nu}$ which in turn follows from the weighted AM-GM inequality~\eqref{eq.amgm} applied to $a= k+1,b= k+2+\nu,\alpha = 1, \beta = \nu$.
\end{proof}
The above convergence rate generalizes the popular convergence rate for the conditional gradient algorithm~\cite{Beck17,Jagg13}, which relies on the curvature condition~\eqref{eq.curv.2} and sets $\theta_k = 2/(k+2)$.  The latter case corresponds to the special case $\nu=1$ and $\Psi=i_C$ in Proposition~\ref{prop.condgrad}.  

\medskip

In principle Proposition~\ref{prop.condgrad} requires knowledge of $\nu$ since it relies on the stepsize choice $\theta_k = (\nu+1)/(k+\nu+1)$.  However, it is easy to see that the $\Oh(1/k^\nu)$ convergence rate in Proposition~\ref{prop.condgrad} remains attainable without knowledge of $\nu$ if the stepsize $\theta_k \in [0,1]$ is instead chosen via the following line-search procedure
\begin{equation}\label{eq.linesearch}
\theta_k = \argmin_{\theta \in [0,1]}\{(1-\theta)\gcggap_k 
+ D(x_{k},s_k,\theta)\}.
\end{equation}
The above results are similar in spirit to the $\Oh(1/k^\nu)$ convergence results developed in~\cite[Section 2]{Nest18}.  However, unlike Proposition~\ref{prop.condgrad} which is entirely norm-independent, the results in~\cite{Nest18} have an additional dependence on the diameter of the domain of the problem which is assumed to be bounded.

The linear convergence result stated in Proposition~\ref{prop.condgrad.linear} below illustrates the power of the line-search procedure~\eqref{eq.linesearch}.  To that end, we consider the case when $f,\Psi$ satisfy the following 
type of {\em joint smoothness and strong convexity property:} there exists a constant $M>0$ such that for all $(x,s,\theta)\in\dom(\Psi)\times \dom(\Psi)\times [0,1]$ 
\begin{equation}\label{eq.smooth.conv}
D_f(x+\theta(s-x),x) \le \frac{M\theta}{1-\theta}\left[ 
(1-\theta)\Psi(x) + \theta\Psi(s) - \Psi(x+\theta(s-x))\right].
\end{equation} 
It is easy to see that~\eqref{eq.smooth.conv}  holds with $M=L/\mu$ when  $\nabla f$ is $L$-Lipschitz continuous and $\Psi$ is $\mu$-strongly convex on $\dom(\Psi)$ for some norm in $\E$ and some $L,\mu > 0$.  Thus~\eqref{eq.smooth.conv} can indeed be seen as a type of joint smoothness and strong convexity of $f,\Psi$.  Observe that~\eqref{eq.smooth.conv} is intrinsic to $f,\Psi$ and is norm-independent as is the following result.

\begin{proposition}\label{prop.condgrad.linear}
Suppose $f$ and $\Psi$ satisfy~\eqref{eq.smooth.conv} some constant $M > 0$
and Step 5 of Algorithm~\ref{algo.condgrad} chooses $\theta_k\in[0,1]$ via the line-search procedure~\eqref{eq.linesearch}. Then the iterates generated by Algorithm~\ref{algo.condgrad} satisfy
\[
f(x_k) + \Psi(x_k) + f^*(u_k) + \Psi^*(-u_k) \le \left(\frac{M}{M+1}\right)^{k-1} D_f(s_0,x_0).
\]
\end{proposition}
\begin{proof}  From~\eqref{eq.D.def.simple} and~\eqref{eq.smooth.conv} it follows that
for all $(x,s,\theta)\in\dom(\Psi)\times \dom(\Psi)\times [0,1]$
\begin{align*}
D(x,s,\theta) &= D_f(x+\theta(s-x),x) + \Psi(x+\theta(s-x)) -(1-\theta) \Psi(x) -\theta\Psi(s)\\
&\le \left[ 
(1-\theta)\Psi(x) + \theta\Psi(s) - \Psi(x+\theta(s-x))\right]\left(\frac{M\theta}{1-\theta} - 1\right).
\end{align*}
In particular, for $\bar \theta = 1/(M+1)\in(0,1)$ we have
\[
(1-\bar \theta)\gcggap_k 
+ D(x_{k},s_k,\bar \theta) = (1-\bar \theta)\gcggap_k.
\]
Therefore~\eqref{eq.gcggap} and the choice of $\theta_k$ via the line-search procedure~\eqref{eq.linesearch} imply that
\[
\gcggap_{k+1}  \le (1-\bar \theta)\gcggap_k = \frac{M}{M+1}\gcggap_k.
\]
Consequently
\begin{align*}
f(x_k) + \Psi(x_k) + f^*(u_k) + \Psi^*(-u_k) \le \gcggap_{k} 
&\le \left(\frac{M}{M+1}\right)^{k-1} \gcggap_1
\\&\le \left(\frac{M}{M+1}\right)^{k-1} D_f(s_0,x_0).
\end{align*}
\end{proof}
Notice that this linear convergence result does not require any boundedness assumption as in~\cite{Nest18}.  Furthermore, the line-search procedure~\eqref{eq.linesearch} does not require knowledge of $M$.

\subsection{Bregman proximal gradient and subgradient algorithm \\ ($h\not\equiv 0$)}
\label{sec.bregman.prox}

Algorithm~\ref{algo.grad} describes a template that includes both a Bregman proximal gradient and a Bregman proximal subgradient algorithm for problem~\eqref{eq.primal.special}.  Note that Algorithm~\ref{algo.grad} can be obtained as a special case of  Algorithm~\ref{algo.bregman} by picking 
$y_k = s_{k-1}$.  

\begin{algorithm}
\caption{Bregman proximal gradient and subgradient algorithm}\label{algo.grad}
\begin{algorithmic}[1]
	\STATE {\bf input:}  $(f,\Psi)$ and $s_{-1} \in \dom(\Psi)\cap\dom(h)$
	\FOR{$k=0,1,2,\dots$}
		\STATE pick $g_{k}\in\partial f(s_{k-1})$
		\STATE pick $t_k > 0$ 
		\STATE pick $s_{k} \in \argmin_s\{t_k(\ip{g_{k}}{s} + \Psi(s)) + D_h(s,s_{k-1})\}$
	\ENDFOR
\end{algorithmic}
\end{algorithm}

\subsubsection{Bregman proximal gradient}
\label{sec.grad}
Suppose Step 4 of Algorithm~\ref{algo.grad} chooses an admissible stepsize $t_k >0$ so that the following version of~\eqref{eq.dc} holds for $y_k=s_{k-1}$
\begin{equation}\label{eq.dc.simple}
t_k\D(x_k,y_k,s_{k},\theta_k)/\theta_k \le D_h(s_{k},s_{k-1}).
\end{equation}
Then Corollary~\ref{cor.conv.generic} implies that for all $x\in \E$
\begin{equation}\label{eq.conv.bound.simple}
f(x_k) +\Psi(x_k) -(f(x) + \Psi(x)) \le \frac{D_h(x, x_0)}{\sum_{i=0}^{k-1}t_i}.
\end{equation}

We will assume that Step 4 of Algorithm~\ref{algo.grad} chooses a {\em sufficiently large} admissible $t_k$ so that \eqref{eq.dc.simple} holds.  To that end, observe that for any fixed $r > 1$ a straightforward back-tracking procedure finds $t_k$  within a factor of $r$ of the largest admissible $t_k>0$ such that \eqref{eq.dc.simple} holds.  When that is the case we will say that $t_k$ is $r$-large for~\eqref{eq.dc.simple}.  We note that such a back-tracking procedure is implementable provided Assumption~\ref{assump.strong} holds.

We next consider the case when $f$ is differentiable and satisfies the following kind of smoothness relative to $h$: there exists some constant $L > 0$ such that
\begin{equation}\label{eq.smooth.1}
D_f(y,x) \le L D_h(y,x) \text{ for all } x,y \in \dom(f).
\end{equation}
Observe that~\eqref{eq.smooth.1} holds in particular when $\nabla f$ is $L$-Lipschitz and $h$ is 1-strongly convex for some norm in $\E$.  However, the relative smoothness condition~\eqref{eq.smooth.1} holds much more broadly.  The concept of relative smoothness and its close connection with first-order methods has been developed and discussed in~\cite{BausBT16,LuFN18,Tebo18,VanN16,VanN17}.  

Our analysis of the Bregman proximal convergence will rely on the following property of
the function $\D$ defined via~\eqref{eq.D.def}.  The convexity of 
$f+\Psi$ implies that
\begin{equation}\label{eq.D.ineq.2}
\D(x,y,s,\theta) \le \theta D_f(s,y).
\end{equation}

\begin{proposition}\label{prop.grad}
Suppose~\eqref{eq.smooth.1} holds and Step 4 of Algorithm~\ref{algo.grad} chooses $t_k$ that is $r$-large for~\eqref{eq.dc.simple} for some $r > 1$. Then 
the sequence $x_k, \; k=1,2,\dots$ generated by Algorithm~\ref{algo.grad} satisfies
\[
f(x_k) +\Psi(x_k) -(f(x) + \Psi(x)) \le \frac{rLD_h(x, x_0)}{k}
\]
for all $x\in \E$.
\end{proposition}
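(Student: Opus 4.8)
The plan is to feed the hypothesis~\eqref{eq.dc.simple} (which holds with $\epsilon_k=0$ whenever $t_k$ is $r$-large) into Corollary~\ref{cor.conv.generic}; this already yields the bound
\[
f(x_k)+\Psi(x_k)-(f(x)+\Psi(x)) \le \frac{D_h(x,x_0)}{\sum_{i=0}^{k-1}t_i}
\]
recorded in~\eqref{eq.conv.bound.simple}. It therefore remains only to bound the stepsize sum from below, and for this I would show that each $r$-large $t_k$ satisfies $t_k \ge 1/(rL)$; granting that, $\sum_{i=0}^{k-1}t_i \ge k/(rL)$, and substitution gives exactly $rLD_h(x,x_0)/k$.

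By the definition of $r$-large, $t_k$ is within a factor $r$ of the largest admissible stepsize for which~\eqref{eq.dc.simple} holds, so it suffices to show that this largest admissible stepsize is at least $1/L$; I would do this by proving that \emph{every} $t\in(0,1/L]$ is admissible for $(s_{k-1},g_k)$ and makes~\eqref{eq.dc.simple} hold. The second half is immediate from relative smoothness: since $y_k=s_{k-1}$ in Algorithm~\ref{algo.grad}, inequality~\eqref{eq.D.ineq.2} gives $\D(x_k,y_k,s_k,\theta_k)/\theta_k \le D_f(s_k,s_{k-1})$, and~\eqref{eq.smooth.1} gives $D_f(s_k,s_{k-1}) \le LD_h(s_k,s_{k-1})$; multiplying by $t\le 1/L$ yields $t\,\D(x_k,y_k,s_k,\theta_k)/\theta_k \le D_h(s_k,s_{k-1})$, which is~\eqref{eq.dc.simple}.

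For admissibility I would use~\eqref{eq.smooth.1} once more, rearranged (using $g_k=\nabla f(s_{k-1})$) as $\ip{g_k}{s} \ge f(s)-f(s_{k-1})+\ip{g_k}{s_{k-1}}-LD_h(s,s_{k-1})$, to lower bound the subproblem objective:
\[
t(\ip{g_k}{s}+\Psi(s))+D_h(s,s_{k-1}) \ge t(f+\Psi)(s)+(1-tL)D_h(s,s_{k-1})+t(\ip{g_k}{s_{k-1}}-f(s_{k-1})).
\]
For $t\le 1/L$ the coefficient $1-tL$ is nonnegative and $D_h\ge 0$, while $f+\Psi$ is bounded below by Assumption~\ref{assump.strong}(a), so the objective is bounded below and hence $t$ is admissible by Assumption~\ref{assump.strong}(b). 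This gives the desired $t_k\ge 1/(rL)$ and closes the argument.

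The one delicate point is this last admissibility claim --- ruling out that the back-tracking search for $t_k$ terminates below $1/L$ --- which is exactly where both relative smoothness and the boundedness-below hypothesis of Assumption~\ref{assump.strong}(a) enter; the remaining steps are routine substitutions into Corollary~\ref{cor.conv.generic}.
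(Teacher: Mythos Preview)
Your proposal is correct and follows essentially the same route as the paper: show that every $t\in(0,1/L]$ is admissible (via~\eqref{eq.smooth.1} and Assumption~\ref{assump.strong}) and satisfies~\eqref{eq.dc.simple} (via~\eqref{eq.D.ineq.2} and~\eqref{eq.smooth.1}), conclude $t_k\ge 1/(rL)$ from $r$-largeness, and plug into~\eqref{eq.conv.bound.simple}. The only cosmetic difference is that the paper writes the admissibility lower bound as $t_k(\ip{g_k}{s}+\Psi(s)+D_f(s,s_{k-1}))$ directly, rather than separating out your $(1-tL)D_h(s,s_{k-1})$ term, but the content is identical.
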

\begin{proof}
The smoothness condition~\eqref{eq.smooth.1}, Assumption~\ref{assump.strong}, inequality~\eqref{eq.D.ineq.2}, and $y_k = s_{k-1}$ imply that $t_k>0$ is admissible 
for $(s_{k-1},g_k) := (s_{k-1},\nabla f(s_{k-1}))$
and makes~\eqref{eq.dc.simple} hold if $t_k \le 1/L$.

Indeed, if $t_k \in (0,1/L]$ then~\eqref{eq.smooth.1} implies that
\begin{align*}
t_k(\ip{g_k}{s} + \Psi(s))+ D_h(s,s_{k-1}) &\ge t_k\left(\ip{g_k}{s} + \Psi(s) + D_f(s,s_{k-1})\right) \\
&= t_k(f(s) + \Psi(s) - f(s_{k-1}) + \ip{g_k}{s_{k-1}}).
\end{align*}
Hence Assumption~\ref{assump.strong} implies that any $t_k\in (0,1/L]$ is admissible.  Furthermore, inequality~\eqref{eq.D.ineq.2}, $t_k \in (0,1/L]$, $y_k = s_{k-1}$, and~\eqref{eq.smooth.1} implies that
\[
t_k\D(x_k,y_k,s_k,\theta_k)/\theta_k  \le t_k D_f(s_k,s_{k-1}) \le D_h(s_k,s_{k-1}),
\]
which is precisely~\eqref{eq.dc.simple}.

 Thus Step 4 of Algorithm~\ref{algo.grad} chooses $t_k\ge 1/(rL)$ so that~\eqref{eq.dc.simple} holds and consequently~\eqref{eq.conv.bound.simple} implies that
\[
f(x_k) +\Psi(x_k) -(f(x) + \Psi(x)) \le \frac{D_h(x, x_0)}{\sum_{i=0}^{k-1}t_i}\le \frac{rLD_h(x, x_0)}{k}
\]
for all $x\in\E$.
\end{proof}
Proposition~\ref{prop.grad} recovers the classic $\Oh(1/k)$ convergence rate of proximal gradient methods~\cite{BausBT16,BeckT09,LuFN18,Tebo18}.

\subsubsection{Bregman proximal subgradient}
\label{sec.subgrad}

Consider now the case when $f$ is not necessarily differentiable but instead the function $f$ satisfies the following kind of {\em continuity relative} to $(h,\Psi)$: there exists a constant $M > 0$ such that for all $y\in \dom(h)\cap\dom(\Psi), \; g \in \partial f(y), \; s\in \dom(h)\cap \dom(\Psi)$ and $t > 0$ 
\begin{equation}\label{eq.relcont}
\frac{M^2t^2}{2}+ t\left( \Psi(s) - \Psi(y) + \ip{g}{s-y} \right) + D_h(s,y)\ge 0. 
\end{equation}
(Recall that we assume $\dom(\Psi) \subseteq \dom(\partial f)$.)
The above concept of relative continuity coincides with the ones proposed by Lu~\cite{Lu17} and by Teboulle~\cite{Tebo18} in the following special case.  When $\Psi = i_C$ for some closed convex set $C\subseteq \E$ the relative continuity condition~\eqref{eq.relcont} holds if and only if for all $y\in \dom(h)\cap C, \; g\in \partial f(y),$ and $s\in \dom(h)\cap C$
\[
\frac{M^2t^2}{2}+ t\ip{g}{s-y} + D_h(s,y)\ge 0.
\]
To motivate  how the above condition is related to continuity, we next show that~\eqref{eq.relcont} holds for $y\in \relint(\dom(f))\cap \dom(\Psi)$ and $s\in \dom(\Psi)$ if both $f$ and $\Psi$ are Lipschitz continuous on $\dom(\Psi)$ and $h$ is 1-strongly convex on $\dom(\Psi)$ for some norm $\|\cdot\|$ in $\E$. Indeed, suppose $f$ and $\Psi$  are respectively $L_f$ and $L_\Psi$ Lipschitz continuous on $\dom(\Psi)$, and $y\in \relint(\dom(f))\cap \dom(\Psi)$.  It thus follows that for $g\in \partial f(y)$ and $s\in \dom(\Psi)$
\[
\ip{g}{s-y} \ge -L_f \|s-y\|
\]
and consequently
\[
\Psi(s) - \Psi(y) +\ip{g}{s-y} \ge -(L_\Psi+L_f) \cdot \|s-y\| \ge -M \cdot \sqrt{2D_h(s,y)}
\]
for $M:=L_\Psi+L_f$.  
Therefore for all $t>0$
\begin{align*}
\frac{M^2t^2}{2}+ t\left( \Psi(s) - \Psi(y) + \ip{g}{s-y} \right) + D_h(s,y)&\ge
\frac{M^2t^2}{2} - Mt \cdot \sqrt{2D_h(s,y)} + D_h(s,y) \\&= \left(\frac{Mt}{\sqrt{2}} - \sqrt{D_h(s,y)} \right)^2 \\
&\ge 0.
\end{align*}

Assumption~\ref{assump.strong} implies that if~\eqref{eq.relcont} holds then any $t>0$ is admissible for $(y,g)$ for all $y\in\dom(\Psi)$ and $g\in \partial f(y)$.

\begin{proposition}\label{prop.subgrad} Suppose~\eqref{eq.relcont} holds.  Then the sequence $z_k, \; k=1,2,\dots$ generated by Algorithm~\ref{algo.grad} satisfies
\begin{equation}\label{eq.subgrad.cont}
f(z_k) + \Psi(z_k) + f^*(g_k) + (\Psi+d_k)^*(-g_k)
 \le 
\frac{M^2\cdot\sum_{i=0}^{k-1} t_i^2/2}{\sum_{i=0}^{k-1} t_i}.
\end{equation}
In particular, for all $x \in \E$
\[
f(z_k) +\Psi(z_k) - (f(x) + \Psi(x)) \le 
\frac{D_h(x, z_0) +M^2\cdot\sum_{i=0}^{k-1} t_i^2/2}{\sum_{i=0}^{k-1} t_i},
\]
and thus if $t_i:=\mathcal C/\sqrt{k}, \; i=0,1,\dots,k-1$, then we get
\[
f(z_k) +\Psi(z_k) - (f(x) + \Psi(x)) \le \frac{D_h(x, z_0)}{\mathcal C\cdot\sqrt{k}} + \frac{\mathcal C\cdot M^2}{2\sqrt{k}}.
\]
\end{proposition}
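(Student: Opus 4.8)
The plan is to read~\eqref{eq.subgrad.cont} off of Theorem~\ref{thm.bregman.subgrad} and then obtain the two ``in particular'' displays by the manipulations already recorded in Section~\ref{sec.bregman}. Since Algorithm~\ref{algo.grad} is the instance of Algorithm~\ref{algo.bregman} with $\F=\E$, $A=I$, and $y_i=s_{i-1}$, inequality~\eqref{eq.pertdual.subgrad} specializes to
\[
f(z_k)+\Psi(z_k)+f^*(u_k)+(\Psi+d_k)^*(-u_k)\le \frac{\sum_{i=0}^{k-1}\bigl(t_i(\Psi(s_{i-1})-\Psi(s_i)-\ip{g_i}{s_i-s_{i-1}})-D_h(s_i,s_{i-1})\bigr)}{\sum_{i=0}^{k-1}t_i}.
\]
So it suffices to establish the per-iteration estimate
\[
t_i\bigl(\Psi(s_{i-1})-\Psi(s_i)-\ip{g_i}{s_i-s_{i-1}}\bigr)-D_h(s_i,s_{i-1})\le \tfrac{M}{2}t_i^2,
\]
since summing this over $i$ and dividing by $\sum_{i=0}^{k-1}t_i$ yields~\eqref{eq.subgrad.cont}.

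To prove this per-iteration estimate I would invoke the relative continuity hypothesis~\eqref{eq.relcont} at the point $x=s_{i-1}$, which first requires exhibiting an element of $\partial(f+\Psi)(s_{i-1})$. One is available: Step~3 of Algorithm~\ref{algo.grad} supplies $g_i\in\partial f(s_{i-1})$, and the optimality conditions~\eqref{eq.op.conds} of the $(i-1)$st prox step supply $g_{i-1}^\Psi\in\partial\Psi(s_{i-1})$, so $p_i:=g_i+g_{i-1}^\Psi\in\partial(f+\Psi)(s_{i-1})$. Convexity of $\Psi$ gives $\Psi(s_{i-1})-\Psi(s_i)\le\ip{g_{i-1}^\Psi}{s_{i-1}-s_i}$, whence the left-hand side of the estimate is at most $t_i\ip{p_i}{s_{i-1}-s_i}-D_h(s_i,s_{i-1})$, and applying~\eqref{eq.relcont} with $x=s_{i-1}$, $g=p_i$, $s=s_i$, $t=t_i$ bounds this by $\tfrac{M}{2}t_i^2$. (As a sanity check, combining~\eqref{eq.op.conds} with the three-point identity $D_h(s_i,s_{i-1})+D_h(s_{i-1},s_i)=\ip{\nabla h(s_i)-\nabla h(s_{i-1})}{s_i-s_{i-1}}$ rewrites the left-hand side as $t_i(\Psi(s_{i-1})-\Psi(s_i)-\ip{g_i^\Psi}{s_{i-1}-s_i})+D_h(s_{i-1},s_i)$, which is manifestly nonnegative; note the $+D_h$ sign here is why one must route the upper bound through $s_{i-1}$ rather than through $s_i$.)

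The remaining displays are then immediate. The first is an instance of the generic gap bound~\eqref{eq.gap}, which holds verbatim with $z_k$ in place of $x_k$ since~\eqref{eq.pertdual.subgrad} is stated for $z_k$: with $\delta_k=\tfrac{M}{2}\sum_{i=0}^{k-1}t_i^2\big/\sum_{i=0}^{k-1}t_i$ and $d_k(x)=D_h(x,s_{-1})\big/\sum_{i=0}^{k-1}t_i=D_h(x,z_0)\big/\sum_{i=0}^{k-1}t_i$ (recall $z_0=s_{-1}$) one gets $f(z_k)+\Psi(z_k)-(f(x)+\Psi(x))\le\bigl(D_h(x,z_0)+\tfrac{M}{2}\sum_{i=0}^{k-1}t_i^2\bigr)\big/\sum_{i=0}^{k-1}t_i$. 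For the last display one substitutes $t_i=C/\sqrt{k}$, $i=0,\dots,k-1$, so that $\sum_{i=0}^{k-1}t_i=C\sqrt{k}$ and $\tfrac12\sum_{i=0}^{k-1}t_i^2=C^2/2$.

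The main obstacle is the per-iteration estimate, and within it the index $i=0$: the argument above needs a subgradient of $\Psi$ at the starting point $s_{-1}$, i.e. $s_{-1}\in\dom(\partial\Psi)$, whereas Algorithm~\ref{algo.grad} only stipulates $s_{-1}\in\dom(\Psi)$. For $i\ge1$ the needed subgradient is produced automatically by~\eqref{eq.op.conds}, so this is purely an issue about the first iterate, handled by the mild standing requirement $s_{-1}\in\dom(\partial(f+\Psi))$ (for instance when $s_{-1}\in\ri(\dom\Psi)$) or by isolating the $i=0$ term. Everything else is bookkeeping, once one checks that $f(s_i)$, $\Psi(s_i)$, and the conjugate values remain finite along the iteration, which follows from $s_i\in\dom(\Psi)\cap\dom(\partial f)$ together with the admissibility of each $t_i$.
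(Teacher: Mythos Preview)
Your proof is correct and follows essentially the same route as the paper: bound $\Psi(s_{i-1})-\Psi(s_i)$ by $\ip{\tilde g_i}{s_{i-1}-s_i}$ for a subgradient of $\Psi$ at $s_{i-1}$, combine with $g_i\in\partial f(s_{i-1})$ to get an element of $\partial(f+\Psi)(s_{i-1})$, and then apply~\eqref{eq.relcont} to obtain the per-iteration bound $Mt_i^2/2$ feeding into~\eqref{eq.pertdual.subgrad}. The paper's proof simply asserts the existence of some $\tilde g_i\in\partial\Psi(s_{i-1})$ without singling out the $i=0$ case you flag, so your remark about needing $s_{-1}\in\dom(\partial\Psi)$ is in fact more careful than the original.
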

\begin{proof}
Since Algorithm~\ref{algo.grad} chooses $y_k = s_{k-1}, \; k=0,1,2,\dots$, the relative continuity condition~\eqref{eq.relcont} implies that for $k=0,1,2,\dots$
\begin{align*}
t_k(\Psi(y_k) - \Psi(s_{k}) - \ip{g_k}{s_k-y_{k}})-D_h(s_k,s_{k-1}) \le \frac{M^2t_k^2}{2}.
\end{align*}
Thus~\eqref{eq.subgrad.cont} follows from~\eqref{eq.pert.dual.2} and~\eqref{eq.pertdual.subgrad} in Theorem~\ref{thm.bregman.subgrad}.
\end{proof}

Proposition~\ref{prop.subgrad} recovers the classic $\Oh(1/\sqrt{k})$ convergence rate of the projected gradient method, that is, the case when $\Psi = i_C$ for some closed convex $C\subseteq\R^n$ as discussed in~\cite{Lu17,Tebo18}.

\subsection{Fast and universal Bregman proximal gradient algorithm}\label{sec.bregman.fast}

Suppose $f$ is differentiable.
Algorithm~\ref{algo.fastgrad} describes a template that includes both a fast and a universal Bregman proximal gradient method.  The fast gradient method is nearly identical to the iconic fast gradient algorithms proposed by Beck and Teboulle~\cite{BeckT09} and by Nesterov~\cite{Nest13}.  The universal gradient method is nearly identical to the method proposed by Nesterov~\cite{Nest15}.  Algorithm~\ref{algo.fastgrad} relies on the sequence $\theta_k,\;k=0,1,\dots$ defined via~\eqref{eq.theta}. Algorithm~\ref{algo.fastgrad} can be obtained as a special case of  Algorithm~\ref{algo.bregman} by picking $y_{k} = (1-\theta_k)x_k + \theta_k s_{k-1}$.

\begin{algorithm}
\caption{Fast and universal Bregman proximal gradient algorithm}\label{algo.fastgrad}
\begin{algorithmic}[1]
	\STATE {\bf input:}  $(f,\Psi)$ and $x_0:=s_{-1} \in \dom(\partial f)\cap\dom(h)$
	\FOR{$k=0,1,2,\dots$}
		\STATE let $y_{k} := (1-\theta_k)x_k + \theta_k s_{k-1}$
		\STATE pick $t_k > 0$ 
		\STATE let $s_{k} \in \argmin_s\{t_k(\ip{\nabla f(y_k)}{s} + \Psi(s)) + D_h(s,s_{k-1})\}$
		\STATE let $x_{k+1} = (1-\theta_k)x_k + \theta_k s_{k}$
	\ENDFOR
\end{algorithmic}
\end{algorithm}

The main convergence properties of Algorithm~\ref{algo.fastgrad} rely on some properties of the sequences $t_k,\; k=0,1,\dots$ and $\theta_k,\; k=0,1,\dots$.  Observe that the construction~\eqref{eq.theta} implies that for $k=0,1,\dots$
\[
\frac{\theta_{k}}{t_{k}} = \frac{1}{\sum_{i=0}^k t_i}
\]
and thus
\begin{equation}\label{eq.t.theta}
\frac{\theta_{k+1}}{t_{k+1}} = (1-\theta_{k+1})\cdot\frac{\theta_{k}}{t_{k}}.
\end{equation}

We will also rely on the following lemma. For ease of exposition, we defer the proof of Lemma~\ref{lemma.nu} to Section~\ref{sec.proof.lemma}.
\begin{lemma}\label{lemma.nu}  Let $\theta_0 = 1, t_0 > 0$ and let $t_k, \theta_k$ for $ k=0,1,\dots$  be such that~\eqref{eq.t.theta} holds for $k=0,1,\dots.$  
If $\gamma \ge 1$ and $L >0$ are constants such that
\begin{equation}\label{eq.thetati}
\frac{1}{\theta_i^{\gamma-1} t_i} \le L,\; i=0,1,\dots
\end{equation}
then
\begin{equation}\label{eq.thetatk}
\frac{1}{ \sum_{i=0}^{k}t_i} = \frac{\theta_k}{ t_k} \le L\left(\frac{\gamma}{k+\gamma} \right)^{\gamma}, \; k=0,1,\dots. 
\end{equation}
\end{lemma}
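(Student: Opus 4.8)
The plan is to prove the bound $\tfrac{\theta_k}{t_k}\le L\bigl(\tfrac{\gamma}{k+\gamma}\bigr)^\gamma$ by induction on $k$, exploiting the recursion~\eqref{eq.t.theta}, which rearranges to $\tfrac{\theta_{k+1}}{t_{k+1}}=(1-\theta_{k+1})\tfrac{\theta_k}{t_k}$, i.e.\ the ratio $r_k:=\theta_k/t_k$ satisfies $r_{k+1}=(1-\theta_{k+1})r_k$. For the base case $k=0$, note $\theta_0=1$ forces $r_0=1/t_0$, and hypothesis~\eqref{eq.thetati} at $i=0$ gives $1/(\theta_0^{\gamma-1}t_0)=1/t_0\le L=L(\gamma/(0+\gamma))^\gamma$, so the claim holds with equality-type slack at $k=0$.

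For the inductive step, assume $r_k\le L\bigl(\tfrac{\gamma}{k+\gamma}\bigr)^\gamma$. I would first use hypothesis~\eqref{eq.thetati} at index $k+1$, namely $1/(\theta_{k+1}^{\gamma-1}t_{k+1})\le L$, to control $\theta_{k+1}$ from below: since $r_{k+1}=\theta_{k+1}/t_{k+1}=\theta_{k+1}^\gamma\cdot\tfrac{1}{\theta_{k+1}^{\gamma-1}t_{k+1}}\le L\theta_{k+1}^\gamma$, there is a lower bound on $\theta_{k+1}$ whenever $r_{k+1}$ is not too small. More usefully, combine this with the recursion: $r_{k+1}=(1-\theta_{k+1})r_k$, so I want to show $(1-\theta_{k+1})r_k\le L\bigl(\tfrac{\gamma}{k+1+\gamma}\bigr)^\gamma$. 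Using $r_k\le L(\gamma/(k+\gamma))^\gamma$ it suffices that $(1-\theta_{k+1})(k+1+\gamma)^\gamma\le(k+\gamma)^\gamma$. This reduces to a lower bound $\theta_{k+1}\ge 1-\bigl(\tfrac{k+\gamma}{k+1+\gamma}\bigr)^\gamma$, which I expect to derive by feeding the bound $r_{k+1}\le L\theta_{k+1}^\gamma$ (from~\eqref{eq.thetati}) back into the recursion $r_{k+1}=(1-\theta_{k+1})r_k\le(1-\theta_{k+1})L(\gamma/(k+\gamma))^\gamma$, yielding $\theta_{k+1}^\gamma\le(1-\theta_{k+1})(\gamma/(k+\gamma))^\gamma$, i.e.\ $\bigl(\tfrac{(k+\gamma)\theta_{k+1}}{\gamma}\bigr)^\gamma\le 1-\theta_{k+1}$, and then analyzing this scalar inequality.

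The cleanest route is probably to work directly with the scalar function: set $\phi(\theta)=\bigl(\tfrac{(k+\gamma)\theta}{\gamma}\bigr)^\gamma+\theta-1$ on $[0,1]$; the constraint from~\eqref{eq.thetati} and~\eqref{eq.t.theta} forces $\phi(\theta_{k+1})\le 0$, so $\theta_{k+1}$ lies below the unique positive root $\theta^*$ of $\phi$, and one checks $\theta^*\le\gamma/(k+1+\gamma)$ — equivalently $\phi(\gamma/(k+1+\gamma))\ge 0$ — which unwinds to $(k/(k+1+\gamma))^\gamma+\gamma/(k+1+\gamma)\ge 1$, a statement amenable to the weighted AM--GM inequality~\eqref{eq.amgm} applied with $a=k$, $b=k+1+\gamma$, $\alpha=\gamma$, $\beta=1$ (giving $k^\gamma(k+1+\gamma)\le\bigl(\tfrac{\gamma k+(k+1+\gamma)}{\gamma+1}\bigr)^{\gamma+1}=(k+1)^{\gamma+1}\cdot$ only when $\gamma=1$, so more care is needed — the correct convexity estimate is $(1-x)^\gamma\le 1-\gamma x+\binom{\gamma}{2}x^2$ type bounds or the elementary $(1-x)^\gamma\ge 1-\gamma x$ for $\gamma\ge 1$, $x\in[0,1]$). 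Indeed $(k/(k+1+\gamma))^\gamma=(1-\tfrac{1+\gamma}{k+1+\gamma})^\gamma$ and the needed inequality would follow from a Bernoulli-type bound $(1-y)^\gamma\ge 1-\gamma y$ together with $\gamma\cdot\tfrac{1+\gamma}{k+1+\gamma}\le\tfrac{\gamma}{k+1+\gamma}+\text{something}$; I anticipate the actual argument in the paper instead carries through the quantity $1/\sum_{i=0}^k t_i$ telescopically. The main obstacle I foresee is precisely this last scalar estimate: closing the induction requires the right convexity/monotonicity inequality for $\theta\mapsto(1-\theta)(k+1+\gamma)^\gamma$ versus $(k+\gamma)^\gamma$, and getting the constant $\gamma$ to land exactly (rather than with a spurious factor) is where the weighted AM--GM inequality~\eqref{eq.amgm}, highlighted earlier in the paper as pivotal, must be invoked with the correctly chosen weights.
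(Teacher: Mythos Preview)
Your inductive skeleton matches the paper's, and you correctly isolate the crux as a scalar inequality linking $\theta_{k+1}$ to $r_k:=\theta_k/t_k$. But the central step has the inequalities reversed. From $r_{k+1}\le L\theta_{k+1}^\gamma$ (a consequence of~\eqref{eq.thetati}) and the exact recursion $r_{k+1}=(1-\theta_{k+1})r_k$ you can only deduce $\theta_{k+1}^\gamma/(1-\theta_{k+1})\ge r_k/L$, a \emph{lower} bound on $\theta_{k+1}$ in terms of the true value $r_k$; your claimed $\theta_{k+1}^\gamma\le(1-\theta_{k+1})(\gamma/(k+\gamma))^\gamma$ (equivalently $\phi(\theta_{k+1})\le 0$) does not follow, because you are comparing two \emph{upper} bounds on $r_{k+1}$. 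More fatally, the intermediate target you reduce to, $\theta_{k+1}\ge 1-\bigl(\tfrac{k+\gamma}{k+1+\gamma}\bigr)^\gamma$, can fail outright: when the inductive hypothesis has large slack (say $r_k\ll L(\gamma/(k+\gamma))^\gamma$), the constraints allow $\theta_{k+1}$ to be as small as roughly $(r_k/L)^{1/\gamma}$, so no uniform lower bound of that size is available. The induction therefore does not close along your route.

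The paper repairs exactly this by turning the inductive inequality into an equality: it introduces $\hat k\ge k$ with $r_k=L(\gamma/(\hat k+\gamma))^\gamma$. Then the valid inequality above reads $\theta_{k+1}^\gamma/(1-\theta_{k+1})\ge(\gamma/(\hat k+\gamma))^\gamma$, so $\theta_{k+1}\ge\hat\theta$, the root of $\theta^\gamma/(1-\theta)=(\gamma/(\hat k+\gamma))^\gamma$. Weighted AM--GM with $a=\hat k+1,\ b=\hat k+1+\gamma,\ \alpha=1,\ \beta=\gamma-1$ (not your choice $\alpha=\gamma,\ \beta=1$) gives $(\hat k+1)(\hat k+1+\gamma)^{\gamma-1}\le(\hat k+\gamma)^\gamma$, hence $\hat\theta\le\gamma/(\hat k+1+\gamma)\le\gamma/(k+1+\gamma)$, and then
\[
r_{k+1}=(1-\theta_{k+1})r_k\le(1-\hat\theta)\,L\Bigl(\tfrac{\gamma}{\hat k+\gamma}\Bigr)^\gamma=L\hat\theta^\gamma\le L\Bigl(\tfrac{\gamma}{k+1+\gamma}\Bigr)^\gamma.
\]
Incidentally, the scalar estimate you were hunting is exactly $\bigl(\tfrac{k+\gamma}{k+1+\gamma}\bigr)^\gamma\ge\tfrac{k+1}{k+1+\gamma}$ (you wrote $k$ where $k+\gamma$ belongs), and this \emph{is} precisely Bernoulli $(1-y)^\gamma\ge1-\gamma y$ with $y=1/(k+1+\gamma)$; your instinct there was right, but it only helps once the inequality chain is oriented correctly.
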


Our analyses of the fast and universal Bregman proximal convergence will also rely on the following property of the function $\D$ defined via~\eqref{eq.D.def}.  The convexity of 
$\Psi$ and $f$  imply that 
\begin{align}\label{eq.D.ineq}
\D(x,y,s,\theta) &\le f(x+\theta(s-x)) - (1-\theta)f(x) - \theta f(s) + \theta D_f(s,y) \notag \\ 
&= D_f(x+\theta(s-x),y) - (1-\theta)D_f(x,y)\notag \\
& \le D_f(x+\theta(s-x),y).
\end{align}

\subsubsection{Fast Bregman proximal gradient}
\label{sec.fast.grad}
As in Algorithm~\ref{algo.grad}, suppose Step 4 of Algorithm~\ref{algo.fastgrad} chooses an admissible 
 $t_k$ so that~\eqref{eq.dc.simple} holds.  Thus~\eqref{eq.conv.bound.simple} holds as well.  We next show that under a suitable smoothness assumption, the denominator $\sum_{i=0}^{k-1}t_i$ grows faster than in Algorithm~\ref{algo.grad} thereby attaining acceleration.  To that end,
we consider the case when $f$ satisfies the following kind of smoothness relative to $h$: there exist  constants $L > 0$ and $\gamma>1$  such that for $x,s,s_{-} \in \dom(f)$ and $\theta\in [0,1]$
\begin{equation}\label{eq.smooth.2}
D_f((1-\theta)x + \theta s,(1-\theta)x + \theta s_{-}) \le  L \theta^{\gamma} D_h(s,s_{-}).
\end{equation}
The smoothness condition~\eqref{eq.smooth.2} holds in particular for $\gamma = 2$ when $h$ is 1-strongly convex and $\nabla f$ is $L$-Lipschitz continuous  for some norm in $\E$.  It also holds 
when $f$ satisfies the relative smoothness  condition~\eqref{eq.smooth.1} and $D_h$ has {\em triangle scaling exponent} $\gamma$ as defined in~\cite{HanzRX18}.

The smoothness condition~\eqref{eq.smooth.2} is evidently stronger than the relative smoothness condition~\eqref{eq.smooth.1} since  \eqref{eq.smooth.2} for $\theta = 1$ is identical to \eqref{eq.smooth.1}.  The recent developments in~\cite{Drago19} show that a condition stronger than~\eqref{eq.smooth.1} is inevitable for achieving a Bregman proximal gradient method with convergence rate faster than $\Oh(1/k)$.

\begin{proposition}\label{prop.fast}
Suppose~\eqref{eq.smooth.2} holds and 
Step 4 of Algorithm~\ref{algo.fastgrad} chooses 
$t_k$ that is $r$-large for~\eqref{eq.dc.simple} for some $r>1$.  Then the sequence $x_k, \; k=1,2,\dots$ generated by Algorithm~\ref{algo.fastgrad} satisfies
\begin{equation}\label{eq.fast.grad}
f(x_k) +\Psi(x_k) - (f(x) + \Psi(x)) 
\le \frac{ \gamma^\gamma r^\gamma  LD_h(x, x_0)}{(k+\gamma-1)^\gamma}
\end{equation}
for all $x\in \E$.
\end{proposition}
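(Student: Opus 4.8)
The plan is to mirror the proof of Proposition~\ref{prop.grad}, replacing the relative smoothness~\eqref{eq.smooth.1} by the stronger condition~\eqref{eq.smooth.2}, and then to feed the resulting per‑iteration stepsize estimate into Lemma~\ref{lemma.nu} so as to upgrade the $\Oh(1/k)$ rate to the accelerated one. \textbf{Step 1 (small stepsizes work).} First I would show that any $t_k>0$ with $L\theta_k^{\gamma-1}t_k\le 1$ is admissible for $(s_{k-1},\nabla f(y_k))$ and makes~\eqref{eq.dc.simple} hold. Recall $y_k=(1-\theta_k)x_k+\theta_k s_{k-1}$ and $x_{k+1}=(1-\theta_k)x_k+\theta_k s_k$, so $(1-\theta_k)x_k+\theta_k s-y_k=\theta_k(s-s_{k-1})$. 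Applying~\eqref{eq.smooth.2} with $x=x_k$, $s_-=s_{k-1}$, $\theta=\theta_k$ gives, for every $s$,
\[
D_h(s,s_{k-1})\ \ge\ \frac{1}{L\theta_k^\gamma}\,D_f\big((1-\theta_k)x_k+\theta_k s,\,y_k\big)\ \ge\ \frac{t_k}{\theta_k}\,D_f\big((1-\theta_k)x_k+\theta_k s,\,y_k\big),
\]
the last step using $L\theta_k^{\gamma-1}t_k\le1$ and $D_f\ge0$; expanding the Bregman term and bounding $\Psi(s)$ from below by $\theta_k^{-1}\big(\Psi((1-\theta_k)x_k+\theta_k s)-(1-\theta_k)\Psi(x_k)\big)$ via convexity then yields $t_k(\ip{\nabla f(y_k)}{s}+\Psi(s))+D_h(s,s_{k-1})\ge \frac{t_k}{\theta_k}(f+\Psi)\big((1-\theta_k)x_k+\theta_k s\big)+(\text{terms independent of }s)$, which is bounded below by Assumption~\ref{assump.strong}(a); hence $t_k$ is admissible by Assumption~\ref{assump.strong}(b). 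For~\eqref{eq.dc.simple}, inequality~\eqref{eq.D.ineq} with $(x,y,s,\theta)=(x_k,y_k,s_k,\theta_k)$ gives $\D(x_k,y_k,s_k,\theta_k)\le D_f(x_{k+1},y_k)$, and then~\eqref{eq.smooth.2} (again with $x=x_k$, $s=s_k$, $s_-=s_{k-1}$, $\theta=\theta_k$) gives $\D(x_k,y_k,s_k,\theta_k)\le L\theta_k^\gamma D_h(s_k,s_{k-1})$, so $t_k\D(x_k,y_k,s_k,\theta_k)/\theta_k\le L\theta_k^{\gamma-1}t_k\,D_h(s_k,s_{k-1})\le D_h(s_k,s_{k-1})$.

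\textbf{Step 2 (the $r$-large stepsize is not too small).} Since $\theta_k=t_k/\sum_{i=0}^k t_i$, the constraint $L\theta_k^{\gamma-1}t_k\le1$ reads $Lt_k^\gamma\le\big(\sum_{i=0}^{k-1}t_i+t_k\big)^{\gamma-1}$. The map $t\mapsto Lt^\gamma/(\sum_{i=0}^{k-1}t_i+t)^{\gamma-1}$ is strictly increasing on $(0,\infty)$ with range $(0,\infty)$, so it has a unique root $t_k^*>0$, characterised by $L(t_k^*)^\gamma=(\sum_{i=0}^{k-1}t_i+t_k^*)^{\gamma-1}$, and every $t\le t_k^*$ satisfies $L\theta_k^{\gamma-1}t\le1$; by Step~1 every such $t$ is admissible for~\eqref{eq.dc.simple}, so the largest admissible stepsize satisfying~\eqref{eq.dc.simple} is at least $t_k^*$. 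As the chosen $t_k$ is $r$-large for~\eqref{eq.dc.simple}, this gives $t_k\ge t_k^*/r$. Finally, $t\mapsto(\sum_{i=0}^{k-1}t_i+t)^{\gamma-1}/t^\gamma$ is strictly decreasing on $(0,\infty)$, hence
\[
\frac{1}{\theta_k^{\gamma-1}t_k}=\frac{\big(\sum_{i=0}^{k}t_i\big)^{\gamma-1}}{t_k^\gamma}\ \le\ \frac{\big(\sum_{i=0}^{k-1}t_i+t_k^*/r\big)^{\gamma-1}}{(t_k^*/r)^\gamma}\ \le\ r^\gamma\,\frac{\big(\sum_{i=0}^{k-1}t_i+t_k^*\big)^{\gamma-1}}{(t_k^*)^\gamma}=r^\gamma L,
\]
where the second inequality uses $r>1$ and the last equality is the defining relation of $t_k^*$.

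\textbf{Step 3 (assemble).} Since $\theta_0=t_0/t_0=1$, Lemma~\ref{lemma.nu} applies with $\gamma$ as given and with the constant $r^\gamma L$ in the role of $L$ in~\eqref{eq.thetati}, yielding $1/\sum_{i=0}^{k-1}t_i=\theta_{k-1}/t_{k-1}\le r^\gamma L\,\big(\gamma/(k-1+\gamma)\big)^\gamma=\gamma^\gamma r^\gamma L/(k+\gamma-1)^\gamma$ for $k\ge1$. Because Step~1 guarantees that the $r$-large choice of $t_k$ is admissible and makes~\eqref{eq.dc.simple} hold, bound~\eqref{eq.conv.bound.simple} is in force, and substituting the stepsize estimate into it yields precisely~\eqref{eq.fast.grad}.

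\textbf{Main obstacle.} The delicate point is Step~2: because $\theta_k$ itself depends on $t_k$ through $\theta_k=t_k/\sum_{i=0}^k t_i$, the admissibility condition $L\theta_k^{\gamma-1}t_k\le1$ is implicit in $t_k$, and one must carefully track how the multiplicative slack $r$ in the back-tracking propagates through the nonlinear relation between $t_k$ and $\sum_{i=0}^k t_i$; this is exactly where the exponent on $r$ in~\eqref{eq.fast.grad} becomes $\gamma$ rather than $1$. The admissibility verification in Step~1 is a mild generalisation of the one in Proposition~\ref{prop.grad}, complicated only by the extrapolated point $y_k$, and the final assembly in Step~3 is routine.
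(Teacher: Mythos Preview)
Your proof is correct and follows essentially the same route as the paper's: show that $t_k\theta_k^{\gamma-1}\le 1/L$ suffices for admissibility and for~\eqref{eq.dc.simple} via~\eqref{eq.D.ineq} and~\eqref{eq.smooth.2}, deduce that the $r$-large stepsize satisfies $1/(\theta_k^{\gamma-1}t_k)\le r^\gamma L$, and then invoke Lemma~\ref{lemma.nu} with $r^\gamma L$ in place of $L$ together with~\eqref{eq.conv.bound.simple}. The only substantive difference is that your Step~2 actually \emph{proves} the bound $1/(\theta_k^{\gamma-1}t_k)\le r^\gamma L$, whereas the paper simply asserts ``Step~4 chooses $t_k>0$ so that~\eqref{eq.dc.simple} holds with $t_k\theta_k^{\gamma-1}\ge 1/(r^\gamma L)$'' without argument; your observation that the implicit dependence of $\theta_k$ on $t_k$ makes this nontrivial (and is precisely what forces the exponent $\gamma$ on $r$) is well taken, and your monotonicity argument for $t\mapsto Lt^\gamma/(\sum_{i<k}t_i+t)^{\gamma-1}$ cleanly fills that gap. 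One minor wording issue: in Step~3 you write that ``Step~1 guarantees that the $r$-large choice of $t_k$ is admissible and makes~\eqref{eq.dc.simple} hold,'' but in fact Step~1 only covers $t\le t_k^*$; it is the \emph{definition} of $r$-large that ensures the chosen $t_k$ is admissible and satisfies~\eqref{eq.dc.simple}, while Step~1 is what guarantees such stepsizes exist so that the back-tracking is well-posed.
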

\begin{proof}
The smoothness condition~\eqref{eq.smooth.2},
Assumption~\ref{assump.strong}, and inequality~\eqref{eq.D.ineq} imply that $t_k>0$ is admissible for $(s_{k-1},g_k) := (s_{k-1},\nabla f(y_k))$ and~\eqref{eq.dc.simple} holds if $t_k\theta_k^{\gamma-1}\le 1/L$.  

Indeed, if $t_k>0$ and $t_k\theta_k^{\gamma-1} \in (0,1/L]$ then~\eqref{eq.smooth.2} and the convexity of $\Psi$ imply that
\begin{align*}
t_k&(\ip{g_k}{s} + \Psi(s))  + D_h(s,s_{k-1}) \\
&\ge t_k\left(\ip{g_k}{s} + \Psi(s) + L\theta_k^{\gamma-1} D_h(s,s_{k-1})\right) \\
& \ge \frac{t_k}{\theta_k} \left( \theta_k(\ip{g_k}{s} + \Psi(s)) + D_f( (1-\theta_k)x_k + \theta_k s,(1-\theta_k)x_k + \theta_k s_k)\right) \\
&\ge \frac{t_k}{\theta_k}((f+\Psi)((1-\theta_k)x_k + \theta_k s) - (f+\Psi)((1-\theta_k)x_k + \theta_k s_{k-1}) + \theta_k\ip{g_k}{s_{k-1}}).
\end{align*}
Hence Assumption~\ref{assump.strong} implies that any $t_k>0$ with $t_k\theta_k^{\gamma-1}\in (0,1/L]$ is admissible.  Furthermore, inequality~\eqref{eq.D.ineq}, $t_k\theta_k^{\gamma-1} \in (0,1/L]$, $y_k = (1-\theta_k)x_k + \theta_k s_{k-1}$, and~\eqref{eq.smooth.2} implies that
\begin{align*}
\frac{t_k\D(x_k,y_k,s_k,\theta_k)}{\theta_k}  
&\le  \frac{D_f((1-\theta_k)x_k+\theta_k s_k,y_{k})}{L\theta_k^\gamma} \\
&= \frac{D_f((1-\theta_k)x_k+\theta_k s_k,(1-\theta_k)x_k+\theta_k s_{k-1})}{L\theta_k^\gamma} \\
& \le D_h(s_k,s_{k-1}),
\end{align*}
which is precisely~\eqref{eq.dc.simple}.

Thus  Step 4 of Algorithm~\ref{algo.fastgrad} chooses $t_k>0$ so that~\eqref{eq.dc.simple} holds with $t_k\theta_k^{\gamma-1}\ge 1/(r^\gamma L)$.  Therefore~\eqref{eq.conv.bound.simple} and Lemma~\ref{lemma.nu} applied to  $r^\gamma L$ in lieu of $L$ imply~\eqref{eq.fast.grad}.
\end{proof}

Proposition~\ref{prop.fast} recovers the optimal convergence rate of fast proximal gradient methods~\cite{BeckT09,Nest83,Nest13}.  Indeed, all of the results discussed in~\cite{BeckT09,Nest83,Nest13} apply when $\nabla f$ is $L$-Lipchitz and $h$ is $1$-strongly convex for some norm in $\E$.  In that case the relative smoothness condition~\eqref{eq.smooth.2} holds for $\gamma=2$ and thus the iconic $\Oh(1/k^2)$ rate of convergence follows from
Proposition~\ref{prop.fast}.

\subsubsection{Universal Bregman proximal gradient}
\label{sec.universal}
Suppose $\epsilon >0$ is fixed and Algorithm~\ref{algo.fastgrad} chooses  an admissible $t_k$ so that the following version of~\eqref{eq.dc} holds:
\begin{equation}\label{eq.dc.univ}
t_k\D(x_k,y_k,s_{k},\theta_k)/\theta_k \le  D_h(s_{k},s_{k-1}) + t_k \epsilon.
\end{equation}
Then 
Corollary~\ref{cor.conv.generic} implies that for all $x\in \E$
\begin{equation}\label{eq.univ}
f(x_k) + \Psi(x_k) - (f(x) + \Psi(x)) \le \frac{D_h(x, x_0)}{\sum_{i=0}^{k-1}t_i} + \epsilon.
\end{equation} 
We next consider the case when $f$ satisfies the following kind of relative smoothness condition:
there exist constants $\nu \in [0,1]$ and $M>0$  such that for $x,s,s_{-} \in \dom(f)$ and $\theta\in [0,1]$
\begin{equation}\label{eq.smooth.3}
D_f((1-\theta)x + \theta s,(1-\theta)x + \theta s_{-}) \le  \frac{2 M \theta^{1+\nu} D_h(s,s_{-})^{\frac{1+\nu}{2}}}{1+\nu}.
\end{equation}
The smoothness condition~\eqref{eq.smooth.3} holds in particular when $h$ is 1-strongly convex and $\nabla f$ satisfies the H\"olderian continuity condition~\eqref{eq.holder} for some norm in $\E$.


\begin{proposition}\label{prop.univ}
Suppose~\eqref{eq.smooth.3} holds and Step 4 of Algorithm~\ref{algo.fastgrad} chooses  $t_k$ that is $r$-large for~\eqref{eq.dc.univ} for some fixed $\epsilon > 0$ and some $r>1$.  Then the sequence $x_k, \; k=1,2,\dots$ generated by Algorithm~\ref{algo.fastgrad} satisfies
\begin{equation}\label{eq.univ.conv}
f(x_k) + \Psi(x_k) - (f(x) + \Psi(x)) \le 
\frac{2r^{\frac{1+3\nu}{1+\nu}}M^{\frac{2}{1+\nu}} D_h(x, x_0)}{\epsilon^{\frac{1-\nu}{1+\nu}} k^{\frac{1+3\nu}{1+\nu}}} + \epsilon
\end{equation}
for all $x\in\E$.
\end{proposition}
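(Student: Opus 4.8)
The plan is to follow the template of the proof of Proposition~\ref{prop.fast}, with the weighted AM--GM inequality~\eqref{eq.amgm} playing the role of the algebraic device that converts the H\"olderian smoothness~\eqref{eq.smooth.3} into an admissibility condition of the same scalar shape ($t_k\theta_k^{\gamma-1}\le \bar t$) that Lemma~\ref{lemma.nu} can digest. Write $\gamma:=\frac{1+3\nu}{1+\nu}$, so that $\gamma\in[1,2]$ and $\gamma-1=\frac{2\nu}{1+\nu}$; I treat $\nu\in[0,1)$, the case $\nu=1$ reducing to Proposition~\ref{prop.fast} since then \eqref{eq.smooth.3} is \eqref{eq.smooth.2} with $\gamma=2$.

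First I would show that any $t_k>0$ with $t_k\theta_k^{\gamma-1}\le \bar t$, where $\bar t:=\left(\frac{(1+\nu)\epsilon}{1-\nu}\right)^{\frac{1-\nu}{1+\nu}}M^{-\frac{2}{1+\nu}}$, is admissible for $(s_{k-1},\nabla f(y_k))$ and makes~\eqref{eq.dc.univ} hold. Combining~\eqref{eq.D.ineq}, the identity $y_k=(1-\theta_k)x_k+\theta_k s_{k-1}$, and the smoothness condition~\eqref{eq.smooth.3} gives, for the relevant evaluation points, $\frac{t_k}{\theta_k}\D(x_k,y_k,s,\theta_k)\le \frac{t_k}{\theta_k}D_f((1-\theta_k)x_k+\theta_k s,y_k)\le \frac{2Mt_k\theta_k^{\nu}}{1+\nu}\,D_h(s,s_{k-1})^{(1+\nu)/2}$. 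To dispose of the fractional power I would apply~\eqref{eq.amgm} with $\alpha=\frac{1+\nu}{2},\ \beta=\frac{1-\nu}{2},\ a=\frac{2}{1+\nu}D_h(s,s_{k-1}),\ b=\frac{2t_k\epsilon}{1-\nu}$, which yields $D_h(s,s_{k-1})+t_k\epsilon\ge \left(\frac{2}{1+\nu}\right)^{(1+\nu)/2}\left(\frac{2t_k\epsilon}{1-\nu}\right)^{(1-\nu)/2}D_h(s,s_{k-1})^{(1+\nu)/2}$; comparing the two displays shows that $\frac{t_k}{\theta_k}\D(x_k,y_k,s,\theta_k)\le D_h(s,s_{k-1})+t_k\epsilon$ precisely when $\frac{2Mt_k\theta_k^{\nu}}{1+\nu}\le\left(\frac{2}{1+\nu}\right)^{(1+\nu)/2}\left(\frac{2t_k\epsilon}{1-\nu}\right)^{(1-\nu)/2}$, which after isolating the $t_k$--powers and raising to the power $\frac{2}{1+\nu}$ is exactly $t_k\theta_k^{\gamma-1}\le\bar t$. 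Rewriting this estimate as a lower bound on $D_h(s,s_{k-1})$ and substituting into the subproblem objective $s\mapsto t_k(\ip{\nabla f(y_k)}{s}+\Psi(s))+D_h(s,s_{k-1})$ exactly as in the proof of Proposition~\ref{prop.fast} (using convexity of $f+\Psi$ and that it is bounded below) shows that this objective is bounded below by a constant; admissibility then follows from Assumption~\ref{assump.strong}, and choosing $s=s_k$ gives~\eqref{eq.dc.univ}.

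Next, since the map $t\mapsto t\,\theta^{\gamma-1}=t^{\gamma}\big(\sigma+t\big)^{1-\gamma}$ (with $\sigma=\sum_{i=0}^{k-1}t_i$ fixed) is increasing for $\gamma\ge 1$, choosing $t_k$ that is $r$-large for~\eqref{eq.dc.univ} forces $t_k\theta_k^{\gamma-1}\ge \bar t/r^{\gamma}$, i.e. $\frac{1}{\theta_k^{\gamma-1}t_k}\le r^{\gamma}/\bar t$ for all $k$ — this is the step where the exponent $r^{\gamma}$ appears, as it does in Proposition~\ref{prop.fast}. Then Lemma~\ref{lemma.nu}, applied with this $\gamma$ and with $r^{\gamma}/\bar t$ in place of $L$, gives $\frac{1}{\sum_{i=0}^{k-1}t_i}\le\frac{r^{\gamma}}{\bar t}\left(\frac{\gamma}{k-1+\gamma}\right)^{\gamma}\le\frac{r^{\gamma}\gamma^{\gamma}}{\bar t\,k^{\gamma}}$, using $k-1+\gamma\ge k$. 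Substituting into~\eqref{eq.univ} and bounding the numerical constant $\gamma^{\gamma}/\bar t$ using $\gamma\in[1,2]$ together with the explicit form of $\bar t$ yields~\eqref{eq.univ.conv} for all $x\in\E$.

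The main obstacle is the AM--GM manipulation in the second paragraph: the arguments fed into~\eqref{eq.amgm} must be chosen so that the resulting admissibility condition collapses to the single scalar inequality $t_k\theta_k^{\gamma-1}\le\bar t$ with $\bar t$ of the stated form, after which the argument becomes structurally identical to that of Proposition~\ref{prop.fast}. A secondary subtlety is that, unlike in Proposition~\ref{prop.fast} where~\eqref{eq.smooth.2} bounds $D_f$ by a quantity linear in $D_h$, here~\eqref{eq.smooth.3} only controls $D_f$ by $D_h^{(1+\nu)/2}$; the same AM--GM estimate is what restores a lower bound on the subproblem objective that is linear in $D_h$ and hence certifies boundedness below. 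The remaining constant bookkeeping is routine.
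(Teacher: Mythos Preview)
Your proposal is correct and follows essentially the same route as the paper: both proofs use the weighted AM--GM inequality to convert the H\"olderian bound~\eqref{eq.smooth.3} into the scalar sufficient condition $t_k\theta_k^{\gamma-1}\le\bar t$ (with the identical $\bar t$), verify admissibility and~\eqref{eq.dc.univ} under that condition, and then feed the resulting bound on $1/(\theta_k^{\gamma-1}t_k)$ into Lemma~\ref{lemma.nu}. The only cosmetic difference is the parameterization of the AM--GM step---you bound $D_h(s,s_{k-1})+t_k\epsilon$ from below, while the paper bounds $D_h(s,s_{k-1})^{(1+\nu)/2}$ from above---and your monotonicity argument for the $r^\gamma$ exponent is in fact cleaner than the paper's corresponding step.
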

\begin{proof} 
We prove~\eqref{eq.univ.conv} for $\nu \in [0,1)$.  The case $\nu=1$ follows as a limit case.
The smoothness condition~\eqref{eq.smooth.3},
Assumption~\ref{assump.strong}, and inequality~\eqref{eq.D.ineq} imply that $t_k > 0$ is admissible and~\eqref{eq.dc.univ} holds if the following inequality holds for $s\in\dom(h)$:
\begin{equation}\label{eq.dc.univ.suff}
D_h(s,s_{k-1})^{\frac{1+\nu}{2}} \le   \frac{1+\nu}{2}\cdot\frac{\epsilon}{M\theta_k^\nu} + \frac{1+\nu}{2}\cdot\frac{D_h(s,s_{k-1})}{M\theta_k^\nu t_k}.
\end{equation}
Indeed, if $t_k > 0$ and~\eqref{eq.dc.univ.suff} holds then~\eqref{eq.smooth.3} and the convexity of $\Psi$ imply that
\begin{align*}
t_k&(\ip{g_k}{s} + \Psi(s))  + D_h(s,s_{k-1}) \\
& \ge \frac{t_k}{\theta_k} 
\left( \theta_k(\ip{g_k}{s} + \Psi(s)) + \frac{2M\theta^{1+\nu}D_h(s,s_{k-1})^{\frac{1+\nu}{2}}}{1+\nu}- \theta_k\epsilon\right) \\
& \ge \frac{t_k}{\theta_k} 
\left( \theta_k(\ip{g_k}{s} + \Psi(s)) + D_f((1-\theta_k)x_k + \theta_k s,(1-\theta_k)x_k + \theta_k s_{k-1})- \theta_k\epsilon\right) \\
&\ge \frac{t_k}{\theta_k}((f+\Psi)((1-\theta_k)x_k + \theta_k s) - (f+\Psi)((1-\theta_k)x_k + \theta_k s_{k-1}) + \theta_k\ip{g_k}{s_{k-1}} - \theta_k\epsilon).
\end{align*}
Hence Assumption~\ref{assump.strong} implies that any $t_k>0$ that satisfies~\eqref{eq.dc.univ.suff}  is admissible.  Furthermore,  $y_k = (1-\theta_k)x_k + \theta_k s_{k-1}$, together with~\eqref{eq.D.ineq},~\eqref{eq.smooth.3}, and~\eqref{eq.dc.univ.suff} imply that
\begin{align*}
\frac{t_k\D(x_k,y_k,s_k,\theta_k)}{\theta_k}  
&\le  \frac{t_kD_f((1-\theta_k)x_k+\theta_k s_k,y_{k})}{\theta_k} \\
&=  \frac{t_kD_f((1-\theta_k)x_k+\theta_k s_k,(1-\theta_k)x_k+\theta_k s_{k-1})}{\theta_k} \\
&\le \frac{2t_kM\theta_k^{\nu}D_h( s_k, s_{k-1})^{\frac{1+\nu}{2}}}{1+\nu} \\
& \le D_h(s_k,s_{k-1}) + t_k \epsilon,
\end{align*}
which is precisely~\eqref{eq.dc.univ}.

If we let $\alpha := \frac{1-\nu}{2}, \beta := \frac{1+\nu}{2}, a := \frac{1+\nu}{1-\nu}\cdot \frac{\epsilon}{M\theta_k^\nu},$ and $b:=\frac{D_h(s,s_{k-1})}{a^{\frac{1-\nu}{1+\nu}}}$ then the weighted AM-GM inequality~\eqref{eq.amgm} implies that
\[
D_h(s,s_{k-1})^{\frac{1+\nu}{2}}  = a^\alpha b^\beta \le \alpha a + \beta b  = \frac{1+\nu}{2}\cdot \frac{\epsilon}{M\theta_k^\nu} + \frac{1+\nu}{2}\cdot \frac{D_h(s,s_{k-1})}{a^{\frac{1-\nu}{1+\nu}}}.
\]
Therefore~\eqref{eq.dc.univ.suff} holds if $t_k>0$ satisfies the following inequality
\[
\frac{1}{M\theta_k^\nu t_k} \ge \frac{1}{a^{\frac{1-\nu}{1+\nu}}} = \left(\frac{1-\nu}{1+\nu}\cdot \frac{M\theta_k^\nu}{\epsilon} \right)^{\frac{1-\nu}{1+\nu}} \Leftrightarrow
\frac{1}{\theta_k^{\frac{2\nu}{1+\nu}} t_k} \ge
\left(\frac{1-\nu}{1+\nu} \right)^{\frac{1-\nu}{1+\nu}} \cdot \left(\frac{M^2}{\epsilon^{1-\nu}}\right)^{\frac{1}{1+\nu}}.
\]
Consequently, Step 4 of Algorithm~\ref{algo.fastgrad} chooses an admissible $t_k>0$ so that~\eqref{eq.dc.univ} holds with
\[
\frac{1}{\theta_k^{\frac{2\nu}{1+\nu}} t_k} \le 
r^{\frac{2\nu}{1+\nu} + 1}\left(\frac{1-\nu}{1+\nu} \right)^{\frac{1-\nu}{1+\nu}} \cdot \left(\frac{M^2}{\epsilon^{1-\nu}}\right)^{\frac{1}{1+\nu}} \le r^{\frac{1+3\nu}{1+\nu}}\left(\frac{M^2}{\epsilon^{1-\nu}}\right)^{\frac{1}{1+\nu}}.\]  
Lemma~\ref{lemma.nu} applied to $\gamma =\frac{1+3\nu}{1+\nu}$ and $L = r^{\frac{1+3\nu}{1+\nu}}\left(\frac{M^2}{\epsilon^{1-\nu}}\right)^{\frac{1}{1+\nu}} = \frac{r^{\frac{1+3\nu}{1+\nu}}M^{\frac{2}{1+\nu}}}{\epsilon^{\frac{1-\nu}{1+\nu}}}$ implies that
\[
\frac{1}{\sum_{i=0}^{k-1}t_i} \le 
\frac{r^{\frac{1+3\nu}{1+\nu}}M^{\frac{2}{1+\nu}} \gamma}{\epsilon^{\frac{1-\nu}{1+\nu}} (k-1+\gamma)^{\gamma}} \le
\frac{2r^{\frac{1+3\nu}{1+\nu}}M^{\frac{2}{1+\nu}}}{\epsilon^{\frac{1-\nu}{1+\nu}} k^{\frac{1+3\nu}{1+\nu}}}.
\]
The last step holds because $\nu\in[0,1)$ implies that $1\le \gamma < 2$. Thus~\eqref{eq.univ} yields~\eqref{eq.univ.conv}.
\end{proof}

Proposition~\ref{prop.univ} recovers the optimal convergence rate of the universal Bregman gradient algorithm established by Nesterov~\cite{Nest15}.

\bigskip

\subsubsection{Proof of Lemma~\ref{lemma.nu}}
\label{sec.proof.lemma}
We prove~\eqref{eq.thetatk} by induction on $k$.  For $k=0$ inequality~\eqref{eq.thetatk} is the same as~\eqref{eq.thetati} for $i=0$ and thus readily follows from it.  Suppose~\eqref{eq.thetatk} holds for $k \ge 0$.  Hence for some $\hat k \ge k$ we have 
\begin{equation}\label{eq.ind}
\frac{\theta_k}{t_k} = L\left(\frac{\gamma}{\hat k+\gamma} \right)^{\gamma}
\end{equation}
Putting together~\eqref{eq.t.theta},~\eqref{eq.ind}, and~\eqref{eq.thetati} applied to $i=k+1$ we get
\[
\frac{\theta_{k+1}}{t_{k+1}} = (1-\theta_{k+1})\frac{\theta_{k}}{t_{k}}
= (1-\theta_{k+1})
L\left(\frac{\gamma}{\hat k+\gamma} \right)^{\gamma}  \ge 
\frac{(1-\theta_{k+1})}{t_{k+1}\theta_{k+1}^{\gamma -1}}\left(\frac{\gamma}{\hat k+\gamma} \right)^{\gamma}.
\]  
Hence $\theta_{k+1} \ge \hat \theta$ where $\hat \theta \in (0,1)$ is the positive root of the equation
\begin{equation}\label{eq.hattheta}
\frac{\theta^{\gamma}}{1-\theta} = \left(\frac{\gamma}{\hat k+\gamma} \right)^{\gamma}.
\end{equation}
Next, observe that the weighted AM-GM inequality~\eqref{eq.amgm} applied to $a=\hat k+1,b=\hat k+1+\gamma,\alpha = 1, \beta = \gamma-1$ yields 
\[
(\hat k+1)(\hat k+1+\gamma)^{\gamma-1} \le \left(\frac{\hat k+1+(\hat k+1+\gamma)(\gamma-1)}{\gamma}\right)^\gamma = (\hat k+\gamma)^\gamma.
\]
Thus for $\theta := \frac{\gamma}{\hat k+1+\gamma}$ we get
\[
\frac{\theta^{\gamma}}{1-\theta} = \frac{\gamma^\gamma}{(\hat k+1)(\hat k+1+\gamma)^{\gamma -1}}
 \ge \frac{\gamma^\gamma}{(\hat k+\gamma)^\gamma}.
 \]
Hence the monotonicity of $\theta \mapsto  \frac{\theta^{\gamma}}{1-\theta}$ implies that the positive root $\hat \theta$ of~\eqref{eq.hattheta} satisfies $\hat \theta \le \frac{\gamma}{\hat k+1+\gamma} \le \frac{\gamma}{k+1+\gamma}.$  Finally, the inequality $\theta_{k+1} \ge \hat \theta$ together with~\eqref{eq.t.theta} and~\eqref{eq.ind} imply that
\[
\frac{\theta_{k+1}}{t_{k+1}} = (1-\theta_{k+1})\frac{\theta_{k}}{t_{k}} \le
L (1-\hat\theta)\left(\frac{\gamma}{\hat k+\gamma} \right)^{\gamma} = L\hat \theta^\gamma \le L\left(\frac{\gamma}{ k+1+\gamma} \right)^{\gamma}.
\]
Therefore~\eqref{eq.thetatk} also holds for $k+1$.
\qed

\section{Proofs of Theorem~\ref{thm.bregman.subgrad} and Theorem~\ref{thm.bregman.grad}} 
\label{sec.proofs}
\begin{proof}[Proof of Theorem~\ref{thm.bregman.subgrad}]
By rearranging terms, identity~\eqref{eq.subgrad} can be rewritten as follows
\begin{multline}\label{eq.lemma}
\frac{\sum_{i=0}^{k-1} t_i(f(Ay_{i}) + \Psi(s_{i}) + f^*(g_i) + \Psi^*(g_i^\Psi))
}{\sum_{i=0}^{k-1} t_i} + d_k^*(-w_k)
\\ 
= -  \frac{\sum_{i=0}^{k-1} (t_i \ip{A^*g_i}{s_{i}-y_i} + D_h(s_{i},s_{i-1}))}{\sum_{i=0}^{k-1} t_i}.
\end{multline}
Identity~\eqref{eq.lemma} in turn follows from 
identities~\eqref{eq.simple.identity} and~\eqref{eq.key.identity} below.  For $k=1,2,\dots$
\begin{equation}\label{eq.simple.identity}
\nabla h(s_{-1}) - \nabla h(s_{k-1}) = \sum_{i=0}^{k-1} t_i (A^*g_i+g_i^\Psi)
\end{equation}
and
\begin{multline}\label{eq.key.identity}
\sum_{i=0}^{k-1} t_i(f(Ay_i) + \Psi(s_{i}) + f^*(g_i) + \Psi^*(g_i^\Psi) ) +
\ip{\nabla h(s_{k-1}) -\nabla h(s_{-1}) }{s_{k-1}} - D_h(s_{k-1},s_{-1}) 
\\ =
-\sum_{i=0}^{k-1} (t_i \ip{A^*g_i}{s_{i}-y_i} + D_h(s_{i},s_{i-1})).
\end{multline}
Indeed,~\eqref{eq.simple.identity}, and the construction~\eqref{eq.dk} of $w_k$ and $d_k$ readily yield
\[
-w_k = \frac{1}{\sum_{i=0}^{k-1} t_i} (\nabla h(s_{k-1})-\nabla h(s_{-1})) = 
\nabla d_k(s_{k-1}).
\]
Thus 
\[
d_k^*(-w_k) = \ip{-w_k}{s_{k-1}} - d_k(s_{k-1}) = 
\frac{\ip{\nabla h(s_{k-1})-\nabla h(s_{-1})}{s_{k-1}}-D_h(s_{k-1},s_{-1})}{\sum_{i=0}^{k-1} t_i}.
\]
Therefore, after dividing~\eqref{eq.key.identity} by $\sum_{i=0}^{k-1}t_i$ we obtain~\eqref{eq.lemma}.

\medskip

We next prove~\eqref{eq.simple.identity} and~\eqref{eq.key.identity}.
Identity~\eqref{eq.simple.identity} is an immediate consequence of \eqref{eq.op.conds}. 
We prove~\eqref{eq.key.identity} by induction. First, observe that $g_i\in\partial f(Ay_i)$ and $g_i^\Psi\in \partial \Psi(s_i)$ imply that
\[
f(Ay_i) + \Psi(s_{i}) +  f^*(g_i) + \Psi^*(g_i^\Psi) = \ip{A^*g_i}{y_i} + \ip{g_i^\Psi}{s_i}
\]
for $i=0,1,\dots$.  Thus from~\eqref{eq.op.conds} it follows that for $i=0,1,\dots$
\begin{equation}\label{eq.inductive.step}
t_i(f(Ay_i) + \Psi(s_{i}) +  f^*(g_i) + \Psi^*(g_i^\Psi)) + \ip{ \nabla h(s_{i}) - \nabla h(s_{i-1})}{s_{i}} 
= - t_i\ip{A^*g_i}{s_{i}-y_i}.
\end{equation}
For $k=1$ identity~\eqref{eq.key.identity} immediately follows from~\eqref{eq.inductive.step} applied to $i=0$.  Suppose~\eqref{eq.key.identity} holds for $k\ge 1$.  The three-point property of $D_h$~\cite[Lemma 3.1]{ChenT93} implies that
\begin{equation}\label{eq.three.point}
D_h(s_{k-1},s_{-1}) - D_h(s_k,s_{-1}) + \ip{\nabla h(s_{k-1}) - \nabla h(s_{-1})}{s_k-s_{k-1}}= -D_h(s_k,s_{k-1}).
\end{equation}
Thus, after adding up~\eqref{eq.key.identity},~\eqref{eq.inductive.step} applied to $i=k$, and~\eqref{eq.three.point} we get
\begin{multline*}
\sum_{i=0}^{k} t_i(f(Ay_i) + \Psi(s_{i}) + f^*(g_i) + \Psi^*(g_i^\Psi) ) +
\ip{\nabla h(s_{k}) -\nabla h(s_{-1}) }{s_{k}} - D_h(s_{k},s_{-1}) 
\\ =
-\sum_{i=0}^{k} (t_i \ip{A^*g_i}{s_{i}-y_i} + D_h(s_{i},s_{i-1})).
\end{multline*}
Hence~\eqref{eq.key.identity} also holds for $k+1$.

\medskip

To finish the proof of Theorem~\ref{thm.bregman.subgrad}
we next prove~\eqref{eq.pert.dual.2} for $\delta_k$ as in~\eqref{eq.pertdual.subgrad}.  The convexity of $f,\Psi,f^*,\Psi^*$, the constructions~\eqref{eq.xseq} and~\eqref{eq.dk} of  $z_k, u_k$ and $w_k$, and~\eqref{eq.subgrad} imply that 
\begin{multline}\label{eq.pertdual.subgrad.suffice}
f(Az_k) + \Psi(z_k) + f^*(u_k) + \Psi^*(w_k-A^*u_k) + d_k^*(-w_k)\\
 \le 
\frac{\sum_{i=0}^{k-1}  t_i(\Psi(y_{i}) - \Psi(s_{i}) - \ip{A^*g_i}{s_{i}-y_{i}}) - D_h(s_{i},s_{i-1})}{\sum_{i=0}^{k-1} t_i}.
\end{multline}
Since $(\Psi+d_k)^*(-A^*u_k) \le \Psi^*(w_k-A^*u_k) + d_k^*(-w_k)$, inequality~\eqref{eq.pert.dual.2} for $\delta_k$ as in~\eqref{eq.pertdual.subgrad} follows from~\eqref{eq.pertdual.subgrad.suffice}.

\end{proof}

\begin{proof}[Proof of Theorem~\ref{thm.bregman.grad}]
We rely on Theorem~\ref{thm.bregman.subgrad}.  Observe that 
identity~\eqref{eq.subgrad} can be rewritten as follows
\begin{multline*}
\frac{\sum_{i=0}^{k-1} t_i(f(As_{i}) + \Psi(s_{i}) + f^*(g_i) + \Psi^*(g_i^\Psi))
}{\sum_{i=0}^{k-1} t_i} + d_k^*(-w_k)
\\ 
=  \frac{\sum_{i=0}^{k-1} (t_i D_{f\circ A}(s_{i},y_i) - D_h(s_{i},s_{i-1}))}{\sum_{i=0}^{k-1} t_i}.
\end{multline*}
Thus to show~\eqref{eq.bregman.grad} it suffices to show
\begin{multline}\label{eq.bregman.grad.v2.equiv}
f(Ax_k) + \Psi(x_k) - 
\frac{\sum_{i=0}^{k-1} t_i(f(As_{i}) + \Psi(s_{i}))
}{\sum_{i=0}^{k-1} t_i} \\
= \frac{\sum_{i=0}^{k-1} t_i\left(\D(x_i,y_i,s_{i},\theta_i)/\theta_i -  D_{f\circ A}(s_{i},y_i)\right)}{\sum_{i=0}^{k-1} t_i}.
\end{multline}
We prove~\eqref{eq.bregman.grad.v2.equiv} by induction.  The case $k=1$ readily follows as both sides in~\eqref{eq.bregman.grad.v2.equiv} equal zero when $k=1$.  Suppose~\eqref{eq.bregman.grad.v2.equiv} holds for $k$.  Since $x_{k+1} = x_k+\theta_k(s_k-x_k)$, from~\eqref{eq.D.def} it follows that
\begin{multline}\label{eq.extra}
f(Ax_{k+1})+\Psi(x_{k+1}) - (1-\theta_k)(f(Ax_k) + \Psi(x_k)) - \theta_k(f(As_k) + \Psi(s_k))\\
=\D(x_k,y_k,s_k,\theta_k) - \theta_kD_{f\circ A}(s_k,y_k) = \frac{t_k
\left(\D(x_k,y_k,s_k,\theta_k)/\theta_k - D_{f\circ A}(s_k,y_k)\right)}{\sum_{i=0}^{k} t_i}.
\end{multline}
After adding up $(1-\theta_k)$ times~\eqref{eq.bregman.grad.v2.equiv} plus~\eqref{eq.extra} we get
\begin{multline*}
f(Ax_{k+1}) + \Psi(x_{k+1}) - 
\frac{\sum_{i=0}^{k} t_i(f(As_{i}) + \Psi(s_{i}))
}{\sum_{i=0}^{k} t_i} \\
= \frac{\sum_{i=0}^{k} t_i\left( \D(x_i,y_i,s_{i},\theta_i)/\theta_i -  D_{f\circ A}(s_{i},y_i)\right)}{\sum_{i=0}^{k} t_i}.
\end{multline*}
Thus~\eqref{eq.bregman.grad.v2.equiv} also holds for $k+1$.

To finish the proof of Theorem~\ref{thm.bregman.grad}
we next prove~\eqref{eq.pert.dual.2.2} for $\delta_k$ as in~\eqref{eq.pertdual.grad}.  The convexity of $f^*,\Psi^*$ and the constructions~\eqref{eq.xseq} and~\eqref{eq.dk} of $u_k$ and $w_k$ imply that 
\begin{equation}\label{eq.final}
f^*(u_k) + \Psi^*(w_k-A^*u_k) \le \frac{\sum_{i=0}^{k-1} t_i(f^*(g_i) + \Psi^*(g_i^\Psi))
}{\sum_{i=0}^{k-1} t_i}.\end{equation}
Since $(\Psi+d_k)^*(-A^*u_k) \le \Psi^*(w_k-A^*u_k) + d_k(-w_k)$,  inequality~\eqref{eq.pert.dual.2.2} for $\delta_k$ as in~\eqref{eq.pertdual.grad} follows by putting together~\eqref{eq.bregman.grad} and~\eqref{eq.final}.
\end{proof}

\bibliographystyle{plain}

\begin{thebibliography}{10}

\bibitem{Aber18}
J.~Abernethy, K.~Lai, K.~Levy, and J.~Wang.
\newblock Faster rates for convex-concave games.
\newblock {\em arXiv preprint arXiv:1805.06792}, 2018.

\bibitem{AberW17}
J.~Abernethy and J.~Wang.
\newblock On {F}rank-{W}olfe and equilibrium computation.
\newblock In {\em Advances in Neural Information Processing Systems}, pages
  6584--6593, 2017.

\bibitem{Bach15}
F.~Bach.
\newblock Duality between subgradient and conditional gradient methods.
\newblock {\em SIAM Journal on Optimization}, 25(1):115--129, 2015.

\bibitem{BausBT16}
H.~Bauschke, J.~Bolte, and M.~Teboulle.
\newblock A descent lemma beyond {L}ipschitz gradient continuity: first-order
  methods revisited and applications.
\newblock {\em Mathematics of Operations Research}, 42(2):330--348, 2016.

\bibitem{BausC11}
H.~Bauschke and P.~Combettes.
\newblock {\em Convex Analysis and Monotone Operator Theory in Hilbert Spaces},
  volume 408.
\newblock Springer, 2011.

\bibitem{Beck17}
A.~Beck.
\newblock {\em First-Order Methods in Optimization}, volume~25.
\newblock SIAM, 2017.

\bibitem{BeckT03}
A.~Beck and M.~Teboulle.
\newblock Mirror descent and nonlinear projected subgradient methods for convex
  optimization.
\newblock {\em Operations Research Letters}, 31(3):167--175, 2003.

\bibitem{BeckT09}
A.~Beck and M.~Teboulle.
\newblock A fast iterative shrinkage-thresholding algorithm for linear inverse
  problems.
\newblock {\em SIAM Journal on Imaging Sciences}, 2(1):183--202, 2009.

\bibitem{Bell17}
J.~Bello-Cruz.
\newblock On proximal subgradient splitting method for minimizing the sum of
  two nonsmooth convex functions.
\newblock {\em Set-Valued and Variational Analysis}, 25(2):245--263, 2017.

\bibitem{BorwL00}
J.~Borwein and A.~Lewis.
\newblock {\em Convex Analysis and Nonlinear Optimization}.
\newblock Springer, New York, 2000.

\bibitem{BubeLS15}
S.~Bubeck, Y.~Lee, and M.~Singh.
\newblock A geometric alternative to {N}esterov's accelerated gradient descent.
\newblock {\em arXiv preprint arXiv:1506.08187}, 2015.

\bibitem{ChenT93}
G.~Chen and M.~Teboulle.
\newblock Convergence analysis of a proximal-like minimization algorithm using
  {B}regman functions.
\newblock {\em SIAM Journal on Optimization}, 3(3):538--543, 1993.

\bibitem{DiakO19}
J.~Diakonikolas and L.~Orecchia.
\newblock The approximate duality gap technique: A unified theory of
  first-order methods.
\newblock {\em SIAM Journal on Optimization}, 29(1):660--689, 2019.

\bibitem{Drago19}
R.~Dragomir, A.~Taylor, A.~d’Aspremont, and J.~Bolte.
\newblock Optimal complexity and certification of {B}regman first-order
  methods.
\newblock {\em Mathematical Programming}, pages 1--43, 2021.

\bibitem{DrusFR16}
D.~Drusvyatskiy, M.~Fazel, and S.~Roy.
\newblock An optimal first order method based on optimal quadratic averaging.
\newblock {\em SIAM Journal on Optimization}, 28(1):251--271, 2018.

\bibitem{FreuG16}
R.~Freund and P.~Grigas.
\newblock New analysis and results for the {F}rank--{W}olfe method.
\newblock {\em Mathematical Programming}, 155(1-2):199--230, 2016.

\bibitem{GutmP18b}
D.~Gutman and J.~Pe{\~n}a.
\newblock A unified framework for {B}regman proximal methods: subgradient,
  gradient, and accelerated gradient schemes.
\newblock {\em arXiv preprint arXiv:1812.10198}, 2018.

\bibitem{GutmP18}
D.~Gutman and J.~Pe{\~n}a.
\newblock Convergence rates of proximal gradient methods via the convex
  conjugate.
\newblock {\em SIAM Journal on Optimization}, 29(1):162--174, 2019.

\bibitem{HanzRX18}
F.~Hanzely, P.~Richtarik, and L.~Xiao.
\newblock Accelerated {B}regman proximal gradient methods for relatively smooth
  convex optimization.
\newblock {\em arXiv preprint arXiv:1808.03045}, 2018.

\bibitem{HiriL93}
J.~Hiriart-Urruty and C.~Lemar\'echal.
\newblock {\em Convex Analysis and Minimization Algorithms}.
\newblock Springer--Verlag, Berlin, 1993.

\bibitem{Jagg13}
M.~Jaggi.
\newblock Revisiting {F}rank-{W}olfe: Projection-free sparse convex
  optimization.
\newblock In {\em ICML}, volume~28 of {\em JMLR Proceedings}, pages 427--435,
  2013.

\bibitem{LessRP16}
L.~Lessard, B.~Recht, and A.~Packard.
\newblock Analysis and design of optimization algorithms via integral quadratic
  constraints.
\newblock {\em SIAM Journal on Optimization}, 26(1):57--95, 2016.

\bibitem{Lu17}
H.~Lu.
\newblock “{R}elative continuity” for non-{L}ipschitz nonsmooth convex
  optimization using stochastic (or deterministic) mirror descent.
\newblock {\em {INFORMS} Journal on Optimization}, 1(4):288--303, 2019.

\bibitem{LuFN18}
H.~Lu, R.~Freund, and Y.~Nesterov.
\newblock Relatively smooth convex optimization by first-order methods, and
  applications.
\newblock {\em SIAM Journal on Optimization}, 28(1):333--354, 2018.

\bibitem{NemiY83}
A.~Nemirovsky and D.~Yudin.
\newblock {\em Problem Complexity and Method Efficiency in Optimization.}
\newblock Wiley, 1983.

\bibitem{Nest83}
Y.~Nesterov.
\newblock A method for solving the convex programming problem with convergence
  rate $\mathcal{O}(1/k^2)$. {Doklady AN SSSR} (in {R}ussian).
\newblock {\em (English translation. Soviet Math. Dokl.)}, 269:543--547, 1983.

\bibitem{Nest05}
Y.~Nesterov.
\newblock Smooth minimization of non-smooth functions.
\newblock {\em Mathematical Programming}, 103(1):127--152, 2005.

\bibitem{Nest13}
Y.~Nesterov.
\newblock Gradient methods for minimizing composite functions.
\newblock {\em Mathematical Programming}, 140(1):125--161, 2013.

\bibitem{Nest15}
Y.~Nesterov.
\newblock Universal gradient methods for convex optimization problems.
\newblock {\em Mathematical Programming}, 152(1-2):381--404, 2015.

\bibitem{Nest18}
Y.~Nesterov.
\newblock Complexity bounds for primal-dual methods minimizing the model of
  objective function.
\newblock {\em Mathematical Programming}, 171(1):311--330, 2018.

\bibitem{Pena17}
J.~Pe{\~n}a.
\newblock Convergence of first-order methods via the convex conjugate.
\newblock {\em Operations Research Letters}, 45:561--564, 2017.

\bibitem{Rock70}
T.~Rockafellar.
\newblock {\em Convex Analysis}.
\newblock Princeton University Press, Princeton, 1970.

\bibitem{SuBC14}
W.~Su, S.~Boyd, and E.~Cand\`es.
\newblock A differential equation for modeling {N}esterov's accelerated
  gradient method: Theory and insights.
\newblock In {\em Advances in Neural Information Processing Systems}, pages
  2510--2518, 2014.

\bibitem{Tebo18}
M.~Teboulle.
\newblock A simplified view of first order methods for optimization.
\newblock {\em Mathematical Programming}, pages 1--30, 2018.

\bibitem{VanN16}
Q.~Van~Nguyen.
\newblock Variable quasi-{B}regman monotone sequences.
\newblock {\em Numerical Algorithms}, 73(4):1107--1130, 2016.

\bibitem{VanN17}
Q.~Van~Nguyen.
\newblock Forward-backward splitting with {B}regman distances.
\newblock {\em Vietnam Journal of Mathematics}, 45(3):519--539, 2017.

\bibitem{WangA18}
J.~Wang and J.~Abernethy.
\newblock Acceleration through optimistic no-regret dynamics.
\newblock In {\em Advances in Neural Information Processing Systems}, pages
  3824--3834, 2018.

\end{thebibliography}

\end{document}